\newtheorem{remark}[theorem]{ Remark}
\newtheorem{example}{Example}[section]
\newcommand{\ba}{\begin{array}}
\newcommand{\ea}{\end{array}}
\newcommand{\be}{\begin{equation}}
\newcommand{\ee}{\end{equation}}
\newcommand{\beano}{\begin{eqnarray*}}
\newcommand{\eeano}{\end{eqnarray*}}
\def\R{{\mathbb R}}
\def\C{{\mathbb C}}
\def\lam{\lambda}
\def\sig{\sigma}
\def\diag{\mathrm{diag}}
\def\rank{\mathrm{rank}}
\def\rar{\rightarrow}
\def \nrank{\mathrm{nrk}}
\def \sp{\mathrm{Sp}}
\def \R{{\mathbb R}}
\def \C{{\mathbb C}}
\def \sig{\sigma}
\def \C{{\mathbb C}}
\def \lam{\lambda}
\def \sig{\sigma}
\def \diag{\mathrm{diag}}
\def \rank{\mathrm{rank}}
\def \nrank{\mathrm{nrank}}
\def \sp{\textrm{Sp}}
\def \rar{\rightarrow}
\title{Fiedler linearizations of multivariable state-space system and its associated system matrix}
\author{ Namita Behera  \thanks{Department of Mathematics, Sikkim University,
Sikkim-737102, INDIA,({\tt niku.namita@gmail.com, nbehera@cus.ac.in})}
\and  Avisek Bist \thanks{Department of Mathematics, Sikkim University,
Sikkim-737102, INDIA, ({\tt abist.21pdmt01@sikkimuniversity.ac.in, avisek.bista@gmail.com})}}
\begin{document}
\maketitle

\begin{abstract} Linearization is a standard method in the computation of eigenvalues and eigenvectors  of matrix polynomials. In the last decade a variety of linearization methods have been developed in order to deal with algebraic structures and in order to construct efficient numerical methods. An important source of linearizations for matrix polynomials are the so called  \emph{Fiedler pencils}, which are generalizations of the Frobenius companion form and these linearizations have been extended to regular rational matrix function which is the transfer function of LTI State-space system in \cite{rafinami,behera}. We consider a  multivariable state-space system and its associated system matrix $\mathcal{S}(\lam).$  We introduce Fiedler pencils of $\mathcal{S}(\lam)$ and describe
an algorithm for their construction. We show that Fiedler pencils are linearizations of the system matrix $\mathcal{S}(\lam)$.
\end{abstract}
%

\begin{keywords}
rational matrix valued function, matrix polynomial, linearization, linearization, Rosenbrock system matrix.
\end{keywords}

\begin{AMS}
 65F15, 15A21, 65L80, 65L05, 34A30.
\end{AMS}

%
%
%

\section{Introduction}
We denote by $\C[\lam]$ the polynomial ring over the complex field $\C.$  Further, we denote by $ \C^{m\times n}$ and $  \C[\lam]^{m\times n}$, respectively, the vector spaces of $m\times n$ matrices and  matrix polynomials over $\C.$

Consider a matrix polynomial $P(\lam) =\sum_{j=0}^{m} \lam^{j}A_j , \,\, A_j \in \C^{n \times n}$. Then  a matrix polynomial $P(\lam)$ is said to be regular if $\det(P(\lam)) \neq 0$ for some $\lam \in \C$. Linearization is a standard method for solving polynomial eigenvalue problems $P(\lam) x = 0$. Let $ P(\lam)$ be an $n\times n$ matrix polynomial (regular or singular) of degree $m.$ Then an $ mn\times mn$  matrix pencil $L(\lam) := X+ \lam Y$ is said to be a {\em linearization}~\cite{gohberg82} of  $P(\lam)$ if there are $mn\times mn$ unimodular matrix polynomials $U(\lam)$ (the determinant of $U(\lam),$ is a nonzero constant for all $\lam \in \mathbb{C}.$) and $V(\lam)$ such that $$  U(\lam) L(\lam) V(\lam) = \diag(I_{(m-1)n}, \,\, P(\lam)) $$   for all $\lam \in \C,$ where $I_k$ denotes the $k\times k$ identity matrix. Linearizations of matrix polynomials have been studied extensively over the years, see~\cite{gohberg82, mmmm06} and references therein. Recently, a new family of linearizations of matrix polynomials referred to as Fiedler linearizations (or {\em Fiedler pencils})  has been introduced and is an active area of research, see~\cite{AV04, TDM} and references therein. One of the distinctive features of a Fiedler pencil $L(\lam)$ of the matrix polynomial $P(\lam)$ is that its construction is operation free, that is, block entries of $L(\lam)$ are either $0$ or $ I_n$ or the coefficient matrices of $P(\lam)$,  and that $L(\lam)$ allows an easy (operation free) recovery of eigenvectors of $P(\lam)$ from the eigenvectors of $L(\lam)$~\cite{TDM, AV04}.

In this paper we extend the concept of Fiedler linearization from LTI state-space system to general multivariable state-space system and associated system matrix. In particular, in this paper we discuss the solution (finding \emph{eigenvalues} $\lam \in \mathbb C$ and \emph{eigenvectors} $v \in \mathbb C^n$)  of \emph{multivariable state-space system $\Sigma$}
\begin{equation}
\begin{aligned}
A\left(\frac{d}{dt}\right) x(t) &= B u(t), \\
y(t) &= C x(t) + D\left(\frac{d}{dt}\right) u(t) \,\,\,\, t\geq 0,
\end{aligned} \label{msss}
\end{equation}
such that $\mathcal{S}(\lam)v = 0, $
where $A(\lam) = \sum_{j=0}^{d_{A}}\lam^{j}A_j \in \C[\lam]^{n \times n}$ is a regular matrix polynomial of degree $d_{A}$, $D(\lam) = \sum_{j=0}^{d_{D}}\lam^{j}D_j \in \C[\lam]^{m \times m}$ is a matrix polynomial of degree $d_{D}$, and $ C \in \C^{m \times n }, B \in \C^{n \times m}$,
and its associate Rosenbrock system matrix $\mathcal{S}(\lam)$,
\begin{equation}
{S}(\lambda) = \left[ {\begin{array}{c|c}
A(\lambda) & -B \\
\hline
C & D(\lambda) \\
\end{array}}\right] {\mathbb C}[\lambda]^{n+m,n+m}\label{smpq}
\end{equation}
and the associated transfer function
\begin{equation}
R(\lambda) = D(\lambda) + C A(\lambda)^{-1}B \in {\mathbb C}(\lambda)^{m,m}. \label{tf}
\end{equation}




Next, consider a more general linear multivariable time invariant state-space system $\Sigma_1$ on the positive half line $\R_{+}$ in the representation
\begin{eqnarray}
 0&=& A\left(\frac{d}{dt}\right) x(t) +B\left(\frac{d}{dt}\right) u(t), \nonumber \\
 y(t)  &=& C\left(\frac{d}{dt}\right) x(t) + D\left(\frac{d}{dt}\right) u(t).\label{lti-system}
\end{eqnarray}
The function $u : \R^{+} \rightarrow \R^{m}$ is the input vector, $x: \R_{+} \rightarrow \R^{r}$ is the state vector, $y : \R_{+} \rightarrow \R^{p}$ is the output vector, and for $M(\lam) =\sum_{i=0}^d M_i \lambda^i\in \mathbb{C}[\lam]^{p, p}$ we use  $M(\frac{d}{dt})$ to denote the differential operator $\sum_{i=0}^\ell M_i \frac{d^i}{dt^i}$, where $\frac{d}{dt}$ denotes time-differentiation.

The associated matrix polynomial is
\begin{equation}\label{rosmatrix}
S(\lam ) := \left[
                            \begin{array}{c|c}
                              A(\lam) & B(\lam) \\
                              \hline
                              C(\lam) & D(\lam) \\
                            \end{array}
                          \right] \in \C[\lam]^{(p+m),(p+m)}.
\end{equation}
The associate transfer function is defined by
\begin{equation}\label{trfun}
R(\lam) := D(\lam) - C(\lam) A(\lam)^{-1} B(\lam)  \in \C(\lam)^{p, p},
\end{equation}
where,  denoting by $\mathbb C[\lam]^{p, p}$ the vector space of $p \times p$ matrix polynomials, we assume that $ A(\lam) \in \mathbb{C}[\lam]^{m,m}$, $B(\lam) \in \mathbb{C}[\lam]^{m, p}$, $ C(\lam) \in \mathbb{C}[\lam]^{p, m}$, $D(\lam) \in \mathbb{C}[\lam]^{p,p}$.

Notice that in (\ref{smpq}) we consider $B$ and $C$ are constant matrices.

Rational eigenvalue problems arise in many applications, see e.g. \cite{planchard89,voss06,planchard,voss03} and the references therein. Rational matrix value functions of this form arise e.g. in linear system theory, see e.g. \cite{vardulakis}.

If $A(\lambda)$ is  regular, i.e., $\det A(\lambda)$ does not vanish identically,  then performing a Schur complement, one obtains the rational matrix function (\ref{trfun})
which, in frequency domain, describes the \emph{transfer function} from the Laplace transformed input to the Laplace transformed output of the system. In this case $S(\lambda)$ is called a \emph{Rosenbrock system matrix}, see \cite{rosenbrock70}.

Conversely, if one has a given  rational matric function of the form
(\ref{trfun}), then one can always interpret it as originating
from a Rosenbrock system matrix of the form (\ref{rosmatrix}). Such rational matrix valued functions arise from realizations of input-output data, see e.g. \cite{MayA07}, or in model order reduction, see e.g. \cite{Ant05,GugA04}.

We consider the general square polynomial eigenvalue problem
\begin{equation}\label{rosevp}
S(\lam_0 )\left[
                            \begin{array}{c}
                            x_0 \\ u_0
                            \end{array}
                          \right]:= \left[
                            \begin{array}{c|c}
                              A(\lam) & B(\lam) \\
                              \hline
                              C(\lam) & D(\lam) \\
                            \end{array}
                          \right] \left[
                            \begin{array}{c}
                            x_0 \\ u_0
                            \end{array}
                          \right] = 0
\end{equation}

If $A$ and $D$  are square and regular, then one can form the rational function
$R(\lambda)$ as in (\ref{trfun})
%
%
and,  since $\det S(\lambda)=\det A(\lambda)\det R(\lambda)$,
it is clear that the eigenvalues of $S(\lambda)$ are the eigenvalues  of $A$ and $R$ combined
and the eigenvalues of $A(\lambda)$ are the poles of $R$.
We restrict ourselves to rational functions of the form (\ref{trfun}) with regular $A(\lambda)$ and we assume for simplicity that $B,C$ are constant matrices in $\lambda$. All the results can be extended (with a lot of technicalities) to the case that $B,C$ depend on $\lambda$.

The system $\Sigma_1$ given in (\ref{lti-system}) is said to be in {\em state-space form} if it is given by
\begin{equation}
\begin{aligned}
E\dot{x}(t) &= A x(t) + B u(t) \\
y(t) &= C x(t) + P(\lam) u(t),
\end{aligned}\label{tils}
\end{equation}
where $P(\lam) \in \C[\lam]^{n\times n}$ is a matrix polynomial and $A, E \in \C^{m\times m}$ with $E$ being nonsingular, $ B \in \C^{n \times m}, C \in \C^{m \times n}$ are constant matrices, see~\cite{vardulakis}.
For linear time invariant (LTI) state-space system given in (\ref{tils}), there is a state-space framework developed in \cite{rafinami} to study the zeros of LTI system in state space form.

For computing zeros of a linear time-invariant system $\Sigma$ in state-space form, it has been introduced Fiedler-like pencils and Rosenbrock linearization of the Rosenbrock system
polynomial $\mathcal{S}(\lam)$ associated with $\Sigma$. Also, it has shown that  the Fiedler-like pencils are Rosenbrock linearizations of the system polynomial $\mathcal{S}(\lam)$, see \cite{rafinami, rafinami2, rafinami3, behera20}

Next, for the higher order linear time invariant (LTI) state-space system $\Sigma_1$ given by
\begin{equation}
\begin{aligned}
P\left(\frac{d}{dt}\right) x(t) &= B u(t), \\
y(t) &= C x(t) + D u(t),
\end{aligned} \label{mulsss}
\end{equation}
where $P(\lam) = \sum_{j=0}^{m}\lam^{j}A_j \in \C[\lam]^{n \times n}$ is regular matrix polynomial of degree $m$ and $ D \in \C^{r \times r}, C \in \C^{r \times n}, B \in \C^{n \times r}$, there is a state-space framework developed in \cite{beh21} to study the zeros of $\Sigma_1$.
To study the eigenvalues and eigenvectors of the system matrix associated with $\Sigma_1$ recently,
it has been introduced Fiedler-like pencils and Rosenbrock linearization system
polynomial  associated with $\Sigma_1$. Also, it has shown that  the Fiedler-like pencils are Rosenbrock linearizations of the system polynomial, see \cite{beh21}

In this paper we study the relationship between the eigenvalues and eigenvectors of a rational eigenvalue problem given in the form of a transfer function (\ref{trfun}), its polynomial representation as a Rosenbrock matrix  and associated linearizations. we introduce Fiedler linearizations  of the system
matrix $\mathcal{S} (\lam)$ given in (\ref{smpq}) which is also helpful to study zeros of the system $\Sigma$ given in (\ref{msss}). This problem has recently has been studied for higher order state-space system in \cite{beh21} and we will extend these results to the Multivariable state-space case.

The paper is organized as follows.  In section~\ref{sec:Bc} we recall the definition and some properties of matrix which we need throughout this paper. In section~\ref{sec:FP}  we extend the results of Fiedler pencils for Rosenbrock system from \cite{rafinami, behera} to multivariable state space system. That is, we show that the state-space framework so developed in \cite{rafinami} could be
gainfully used to linearize (Fiedler linearizations) a multivariable state-space
system. In particular, we define Fiedler pencils for $\mathcal{S} (\lam)$ given in (\ref{smpq}) and describe an algorithm for their construction.  In Section~4 we prove that these are linearizations for associate system matrix $\mathcal{S} (\lam)$.

{\bf Notation.} An $m\times n$ rational matrix function $R(\lam)$ is an $m\times n$ matrix whose entries are rational functions of the form $p(\lam)/{q(\lam)},$ where $p(\lam)$ and $q(\lam)$ are scalar polynomials in $\C[\lam].$ We denote  the $j$-th column of the $n \times n$ identity matrix $I_n$ by $e_j$ and the transpose of a matrix $A$ by $A^T.$

\section{Basic Concepts}\label{sec:Bc}
\begin{definition} \cite{Ste97}
Let $A \in  \C^{m\times n}, B \in \C^{p\times q}.$ Then the Kronecker product (tensor product) of $A$ and $B$ is defined by
$$A \otimes B = \left[
    \begin{array}{ccc}
      a_{11}B & \cdots &  a_{1n}B \\
      \vdots & \ddots & \vdots \\
       a_{m1}B & \cdots &  a_{mm}B \\
    \end{array}
  \right] \in  \C^{mp\times nq}.
$$
\end{definition}
One of the properties of Kronecker product is as follows: Let $A \in  \C^{m\times n}, B \in \C^{r\times s}, C \in  \C^{n\times p}, D \in \C^{s\times t}$. Then $(A \otimes B)(C \otimes D) = (AC \otimes BD)\in  \C^{mr\times pt}.$

In order to systematically generate the Fiedler linearizations for Rosenbrock system matrices, we need a few concepts introduced in \cite{TDM}.
\begin{definition}\label{dci}
Let $\sigma : \{0, 1, \ldots, p-1\} \rightarrow \{1, 2, \ldots, p\}$ be a bijection.
\begin{itemize}
\item[(1)] For $j = 0, \ldots, p-2$, the bijection is said to have  a consecution at $j$ if $\sigma(j) < \sigma(j+1)$ and $\sigma$ has an inversion at $j$ if $\sigma(j) > \sigma(j+1)$.
\item[(2)] The tuple CISS$(\sigma) :=(c_{1}, i_{1}, c_{2}, i_{2}, \ldots, c_{l}, i_{l})$ is called the consecution-inversion structure sequence of $\sigma$, where $\sigma$ has $c_{1}$ consecutive consecutions at $0, 1, \ldots, c_{1}-1;$ $i_{1}$ consecutive inversions at $c_{1}, c_{1}+1, \ldots, c_{1}+i_{1}-1$ and so on, up to $i_{l}$ inversions at $p-1-i_{l}, \ldots, p-2$.
\item[(3)] The total number of consecutions and inversions in $\sigma$ is denoted by $c(\sigma)$ and $i(\sigma)$, respectively, i.e., $c(\sig) = \sum\limits_{j=1}^{l}c_{j}$,  $i(\sig) = \sum\limits_{j=1}^{l}i_j$, and $c(\sig) + i(\sig) = p-1$.
\end{itemize}
\end{definition}

\begin{definition}~\cite{TDM}
Let $P(\lambda) = A_{0} + \lambda A_{1} + \cdots + \lambda^{m}A_{m}$ be a matrix polynomial of degree $m$. For $k = 0, \ldots m$, the degree $k$ Horner shift of $P(\lambda)$ is the matrix polynomial $P_{k}(\lambda) := A_{m-k} + \lambda A_{m-k+1} + \cdots + \lambda^{k}A_{m}$. These Horner shifts satisfy the following:
\begin{align}
& P_{0}(\lambda) = A_{m}, \nonumber  \,\,\,
 P_{k+1}(\lambda) = \lambda P_{k}(\lambda) + A_{m-k-1} , \mbox{  for } 0\leq k\leq m-1, \,\,\,
P_{m}(\lambda) = P(\lambda). \nonumber \label{hs}
\end{align}
\end{definition}

\begin{definition}\cite{mmmm06}
Matrix polynomial $U(\lam)$ is said to be unimodular if $\det U(\lam)$ is a nonzero constant, independent of $\lam$. Two matrix
polynomials $P(\lam)$ and $Q(\lam)$ are said to be equivalent if there exists unimodular matrices $U(\lam)$ and $V(\lam)$, such that $Q(\lam) = U(\lam)P(\lam)V (\lam).$ If $U(\lam), V (\lam)$ are constant matrices, then $P(\lam)$ and $Q(\lam)$ are said to be strictly equivalent.
\end{definition}

Let $X(\lam)\in \mathbb{C}(\lam)^{m \times n}$ be a rational matrix function. The normal rank~\cite{rosenbrock70} of $ X(\lam)$ is denoted by $ \nrank(X)$ and is given by $\nrank(X) := \max_{\lam}\rank(X(\lam))$ where the maximum is taken over all $\lam \in \C$ which are not poles of the entries of $X(\lam).$ If $\nrank(X) =n = m$ then $X(\lam)$ is said to be regular, otherwise $X(\lam)$ is said to be singular.

A complex number $\lam$ is said to be an eigenvalue of the system matrix $\mathcal{S}(\lam)$ if
$\rank(\mathcal{S}(\lam)) < \nrank(\mathcal{S}).$ An eigenvalue $\lam$ of $\mathcal{S}(\lam)$ is called an invariant zero
of the system $\Sigma$. We denote the set of eigenvalues of $\mathcal{S}(\lam)$ by $\sp(\mathcal{S}).$

Let $G(\lam)\in \mathbb{C}(\lam)^{m \times n}$ be a rational matrix function and let
\begin{equation*}
\mathbf{SM}(G(\lam)) = \diag\left( \frac{\phi_{1}(\lam)}{\psi_{1}(\lam)}, \cdots, \frac{\phi_{k}(\lam)}{\psi_{k}(\lam)}, 0_{m-k, n-k}\right)
\end{equation*}
be the Smith-McMillan form~\cite{kailath, rosenbrock70} of  $G(\lam),$
where the scalar polynomials $\phi_{i}(\lam)$ and $ \psi_{i}(\lam)$ are monic, are pairwise coprime and,  $\phi_{i}(\lam)$ divides $\phi_{i+1}(\lam)$ and $\psi_{i+1}(\lam)$ divides $\psi_{i}(\lam),$ for $i= 1, 2, \ldots, k-1$.
The polynomials $\phi_1(\lam), \ldots, \phi_k(\lam)$ and $ \psi_1(\lam), \ldots, \psi_k(\lam)$ are called {\em invariant zero polynomials} and {\em invariant pole polynomials} of $G(\lam),$ respectively. Define
$$\phi_{G}(\lam) := \prod _{j=1}^{k} \phi_{j}(\lam) \,\,\, \mbox{ and } \,\,\, \psi_{G}(\lam) := \prod _{j=1}^{k} \psi_{j}(\lam).$$
 A complex number $ \lam $ is said to be a  zero of $G(\lam)$ if $ \phi_G(\lam) =0$ and
a complex number $ \lam$ is said to be a  pole of $G(\lam)$ if $\psi_G(\lam) =0.$ The {\bf spectrum} $\sp(G)$ of $G(\lam)$ is given by  $ \sp(G) :=\{ \lam \in \C : \phi_G(\lam) = 0\}.$ That is $\sp(G)$ is the set of zeros of $G(\lam),$ see \cite{rafinami}.


\section{Fiedler pencils for Rosenbrock system matrix}\label{sec:FP}
In this section we define Fiedler pencils for system polynomial $\mathcal{S}(\lambda)$ and describe
an algorithm for their construction.
Let us consider a Rosenbrock system of the form (\ref{smpq}) with $B,C$ constant in $\lambda$,
\begin{equation}
\mathcal{S}(\lambda) = \left[ {\begin{array}{c|c}
A(\lambda) & -B \\
\hline
C & D(\lambda) \\
\end{array}}\right] {\mathbb C}[\lambda]^{n+m,n+m}
\end{equation}
and the associated transfer function
\[
R(\lambda) = D(\lambda) + C A(\lambda)^{-1}B \in {\mathbb F}(\lambda)^{m,m}
 \]
where for $A(\lambda) = \sum_{i=0}^{d_A} \lambda^{i}A_i\in {\mathbb C}[\lambda]^{n,n}$  is regular and
$D(\lambda) = \sum_{j=0}^{d_D}\lambda^j D_j\in {\mathbb C}[\lambda]^{m,m}$.
Our aim is to study linearizations of $R(\lambda)$ and its relation to linearizations of $\mathcal{S}(\lam)$.

The most simple way to perform a direct linearization is to consider a first companion form
\begin{equation}
\mathcal{C}_1(\lam)w := (\lam X+Y) w = 0, \label{C1}
\end{equation}
where
\[
 X := \left[
    \begin{array}{cccc|cccc}
      A_{d_A} &  &  &  &  & & & \\
       & I_n &  &  &  & & & \\
       &  & \ddots &  &  & & & \\
       &  &  & I_n &  & & & \\
      \hline
       &  &  &  & D_{d_D} & & & \\
       &  &  &  &  & I_m & & \\
       &  &  &  &  & & \ddots & \\
       &  &  &  &  &  & & I_m \\
    \end{array}
  \right],\]
\[
Y := \left[\begin{array}{cccc|cccc}
A_{d_A-1} & A_{d_A-2} & \cdots & A_0 & 0 & \cdots & 0 & -B \\
   -I_n & 0 & \cdots & 0 &  & 0 &  & 0 \\
    & \ddots & \ddots & & &  & \ddots & \vdots \\
    &  & -I_n & 0 &  &  &  & 0 \\
    \hline
    0 & \cdots & 0 & C & D_{d_D-1} & D_{d_D-2} & \cdots & D_0 \\
    & 0 &  & 0 & -I_m & 0 & \cdots & 0 \\
    &  & \ddots & \vdots &  & \ddots & \ddots & \vdots \\
    &  &  & 0 &  &  & -I_m & 0 \\
    \end{array}
\right],
\]
and
$$w = \left[
        \begin{array}{c}
          \lam^{d_A-1}(A(\lambda)^{-1})B x \\
          \lam^{d_A-2} (A(\lambda)^{-1})Bx \\
          \vdots \\
          (A(\lambda)^{-1})Bx \\
          \hline
          \
              \lam^{d_D-1} x\\
              \lam^{d_D-2} x \\
              \vdots \\
              x
            \end{array}
      \right]. $$

%
%

It is easy to see that if $\lam$ is an eigenvalue of $R(\lam)$ then $R(\lam)x = 0$ if and
only if $\mathcal{C}_{1}(\lam)w = 0$

An important class of linearizations (which include the first companion form (\ref{C1}) as special case) that has recently received a lot of attention are the Fielder pencils, \cite{AV04,MFT,TDM}. Introducing \emph{Fiedler matrices}  $M_{i}$, $i= 0, 1, \ldots, d_A$ associated with $A(\lam)= \sum_{i=0}^{d_A}\lam^{i} A_i\in {\mathbb C}[\lambda]^{n,n} $ of degree $d_A$, defined by
\begin{eqnarray}\label{imfp}
M_{d_A} &:=& \left[
          \begin{array}{cc}
            A_{d_A} &  \\
             & I_{(d_A-1)n} \\
          \end{array}
        \right],\ M_{0} := \left[
                   \begin{array}{cc}
                     I_{(d_A-1)n} &  \\
                      & -A_{0} \\
                   \end{array}
                 \right], \nonumber
\\
M_{i} &:=& \left[
  \begin{array}{cccc}
    I_{(d_A-i-1)n} &  & & \\
     & -A_{i} & I_{n} & \\
     & I_{n} & 0  &  \\
     &   &   & I_{(i-1)n}\\
  \end{array}
\right], \,\, i= 1, \ldots, d_A-1,
\end{eqnarray}
see e.g. \cite{TDM}.
%
%
%
%

If $\sigma : \{0, 1, \ldots, m-1\}\rightarrow \{ 1, 2, \ldots, m\}$ is a bijection, then one furthermore defines the products $M_{\sigma} := M_{\sigma^{-1}(1)}M_{\sigma^{-1}(2)}\cdots M_{\sigma^{-1}(m)}.$ Note that $\sigma(i)$ describes the position of the factor $M_{i}$ in the product $M_{\sigma}$; i.e., $\sigma(i) = j$ means that $M_{i}$ is the $j$th factor in the product.

Based on the Fiedler matrices,  then for given $A(\lambda)\in {\mathbb F}[\lambda]^{n,n}$ of degree $d_A$ and a bijection $\sigma$, in \cite{TDM} the associated \emph{Fiedler pencil} is defined as the $d_An \times d_An$ matrix pencil
\begin{equation}
L_{\sigma}(\lambda) := \lambda M_{d_A}- M_{\sigma^{-1} (1)}\cdots M_{\sigma^{-1} (d_A)} = \lambda M_{d_A}-  M_{\sigma}. 
\end{equation}
This concept was extended in \cite{rafinami, rafinami2,rafinami3,behera20, behera, beh21}  for square Rosenbrock systems of the state-space form (\ref{tils}) and  (\ref{mulsss}).
In \cite{rafinami} also a multiplication-free algorithm is presented to construct Fiedler pencils for square system polynomials and it is shown that these Fiedler pencils are linearizations of the system polynomial and as well as of the associated transfer functions under some appropriate conditions.

Extending the definition of \cite{rafinami}, based  on the idea of the companion like form (\ref{C1}), we define  $n d_A \times n d_A$ Fiedler matrices associated with $A(\lam)\in {\mathbb F}[\lambda]^{n,n}$ as in (\ref{imfp}), and  Fiedler matrices associated with the matrix polynomial $D(\lam)\in {\mathbb F}[\lambda]^{m,m}$ by
\begin{eqnarray}
N_{d_D} &:=& \left[
          \begin{array}{cc}
            D_{d_D} &  \\
             & I_{(d_D-1)m} \\
          \end{array}
        \right],\ N_{0} := \left[
                   \begin{array}{cc}
                     I_{(d_D-1)m} &  \\
                      & -D_{0} \\
                   \end{array}
                 \right], \nonumber \\
                 \label{0nnfq}
N_{i} &:=& \left[
  \begin{array}{cccc}
    I_{(d_D-i-1)m} &  & & \\
     & -D_{i} & I_{p} & \\
     & I_{m} & 0  &  \\
     &   &   & I_{(i-1)m}\\
  \end{array}
\right], \,\, i= 1, \ldots, d_D-1.
\end{eqnarray}
Based on the Fiedler matrices,  then for given $D(\lambda)\in {\mathbb F}[\lambda]^{m,m}$ of degree $d_D$ and a bijection $\sigma$, in \cite{TDM} the associated \emph{Fiedler pencil} is defined as the $d_Dm \times d_Dm$ matrix pencil
\begin{equation}
T_{\sigma}(\lambda) := \lambda N_{d_D}- N_{\sigma^{-1} (1)}\cdots N_{\sigma^{-1} (d_D)} = \lambda N_{d_D}-  N_{\sigma}. \label{fpp}
\end{equation}
Note that  $M_iM_j=M_jM_i$, $N_i N_j = N_j N_i$ for $|i-j| > 1$ and except for the terms with index $0$, $d_A$ and $d_D$, respectively, each $M_i$ and $N_i$ is invertible.
%
%
We then have the following definition of Fiedler matrices for  Rosenbrock matrices $S(\lambda)\in {\mathbb C}[\lambda]^{n+m,n+m}$ given in (\ref{smpq}).
\begin{definition}\label{mifpq}
Consider a system polynomial $S(\lambda)$ as in (\ref{smpq}) and let $d=\max\{d_A,d_D\}$, $r=\min\{d_A,d_D\}$. Define
$(d_A n+d_D m) \times (d_A n+d_D m)$
matrices $\mathbb{M}_0, \ldots, \mathbb{M}_{d}$ by
\begin{eqnarray*}
\mathbb{M}_0 &=& \left[ {\begin{array}{cc|cc}
I_{(d_A-1)n} & & & \\
 & -A_0 &  & (e_{d_A} e_{d_D}^{T}) \otimes B \\
\hline
 &  & I_{(d_D-1)m} &  \\
 &-(e_{d_D} e_{d_A}^{T})\otimes C  & & -D_0 \\
\end{array}}\right] \\
&=& \left[
                        \begin{array}{c|c}
                          M_0 & (e_{d_A} e_{d_D}^{T}) \otimes B \\
                          \hline
                          -(e_{d_D} e_{d_A}^{T})\otimes C & N_0 \\
                        \end{array}
                      \right] \\
&=& \left[
                        \begin{array}{c|c}
                          M_0 & (e_{d_A} \otimes B)( e_{d_D}^{T} \otimes I_{m}) \\
                          \hline
                          -( e_{d_D}\otimes I_{m})( e_{d_A}^{T} \otimes C) & N_0 \\
                        \end{array}
                      \right], \\
\mathbb{M}_{d} &:=& \left[ {\begin{array}{cc|cc}
 A_{d_A} & & & \\
 & I_{(d_A-1)n} & & \\
\hline
 &  &  D_{d_D} &  \\
 &  & & I_{(d_D-1)m} \\
\end{array}}\right] = \left[
                        \begin{array}{c|c}
                          M_{d_A} &  \\
                          \hline
                           & N_{d_D} \\
                        \end{array}
                      \right], \\
\mathbb{M}_i &:=& \left[
    \begin{array}{cccc|cccc}
      I_{(d_A-i-1)n} &  &  &  &  &  &  &  \\
       & -A_i & I_n &  &  &  &  &  \\
       & I_n & 0 &  &  &  &  &  \\
       &  &  & I_{(i-1)n} &  &  &  &  \\
      \hline
       &  &  &  & I_{(d_D-i-1)m} &  &  &  \\
       &  &  &  &  & -D_i & I_p &  \\
       &  &  &  &  & I_m & 0 &  \\
       &  &  &  &  &  &  & I_{(i-1)m} \\
    \end{array}
  \right] \\
  &=& \left[
              \begin{array}{c|c}
                M_i &  \\
                \hline
                 & N_i \\
              \end{array}
            \right],\ i=1,\ldots,r-1,
\end{eqnarray*}
and
\begin{eqnarray*}
\mathbb{M}_i &:=& \left[
                   \begin{array}{cccc|c}
                     I_{(d_A-i-1)n} &  &  &  &  \\
                      & -A_i & I_n &  &  \\
                      & I_n & 0 &  &  \\
                      &  &  & I_{(i-1)n} &  \\
                     \hline
                      &  &  &  & I_{d_D m} \\
                   \end{array}
                 \right] \\
                 &=& \left[
              \begin{array}{c|c}
                M_i &  \\
                \hline
                 & I_{d_D m} \\
              \end{array}
            \right],\ i= r, r+1,\ldots,d_A-1, \ \mbox{\rm if }\; d_D < d_A,\\
\mathbb{M}_i &:=& \left[
                   \begin{array}{c|cccc}                     I_{d_A n} &  &&&\\
                           \hline
         & I_{(d_D-i-1)m} &  &  &  \\
         &  & -D_i & I_p &  \\
         &  & I_m & 0 &  \\
         &  &  &  & I_{(i-1)m} \\
    \end{array}
  \right] \\
                 &=& \left[
              \begin{array}{c|c}
                I_{d_A n} &  \\
                \hline
                 & N_i\\
              \end{array}
            \right],\ i=d_A, d_A+1,\ldots,d_D-1, \ \mbox{\rm if}\; d_D> d_A.
\end{eqnarray*}
\end{definition}

Observe that as in \cite{rafinami} one has  $\mathbb{M}_i \mathbb{M}_j = \mathbb{M}_j \mathbb{M}_i$  for  $ |i-j| > 1$ and all $\mathbb{M}_i$ (except possibly $\mathbb{M}_0$, $\mathbb M_{d}$) are invertible.

The associated \emph{Fiedler pencils} are then defined as follows.
\begin{definition}
Consider a system polynomial $S(\lambda)$ as in (\ref{smpq}) and let $d=\max\{d_A,d_D\}$, $r=\min\{d_A,d_D\}$.
Let $\mathbb{M}_{0}, \ldots, \mathbb{M}_{d}$ be Fiedler matrices associated with $S(\lam)$ as in Definition~\ref{mifpq}. Given any bijection $\sigma: \{0, 1, \ldots, d-1\} \rightarrow \{1, 2, \ldots, d\}$, the matrix pencil
\begin{equation}
\mathbb{L}_{\sigma}(\lambda) := \lambda \mathbb{M}_{d}- \mathbb{M}_{\sigma^{-1} (1)}\cdots \mathbb{M}_{\sigma^{-1} (m)} =: \lambda \mathbb{M}_{d}- \mathbb{M}_{\sigma}. \label{fps}
\end{equation}
is called the {\em Fiedler pencil} of $S(\lam)$ associated with $\sigma$. We also refer to $\mathbb{L}_{\sigma}(\lambda)$ as a Fiedler pencil of $R(\lam).$
\end{definition}

The companion like form given in (\ref{C1}), then is
\[
\mathcal{C}_1(\lam) = \lam \mathbb{M}_d - \mathbb{M}_{d-1} \cdots \mathbb{M}_1 \mathbb{M}_0
\]
and the associated second companion form of $\mathcal{S}(\lam)$ is
\begin{eqnarray}
\mathcal{C}_2(\lam) &=& \lam \mathbb{M}_d +\mathbb{M}_{0} \mathbb{M}_1 \cdots \mathbb{M}_{d-2} \mathbb{M}_{d-1}\\
&=& \lam \left[
                         \begin{array}{cccc|cccc}
                           A_{d_A} &  &  &  & & & & \\
                            & I_{n} &  &  & & & & \\
                            &  & \ddots &  & & & & \\
                            &  &  & I_{n} & & & & \\
                            \hline
                            &  &  &  & D_{d_D} & & & \\
                            &  &  &  &  & I_m & & \\
                            &  &  &  &  & & \ddots & \\
                            &  &  &  &  & &  & I_m \\
                         \end{array}
                       \right]\nonumber\\
                       && \quad+ \left[
                \begin{array}{cccc|cccc}
                  A_{d_A-1} & -I_{n} &  &  & 0 & & & \\
                  A_{d_A-2} & 0 & \ddots &  & \vdots & & & \\
                  \vdots & \ddots &  & -I_{n} & 0 & & \ddots & \\
                  A_{0} & \cdots & 0 & 0  & -B &  0 & \cdots & 0 \\
                  \hline
                  0  &  &  &   &  D_{d_D} & -I_m & \cdots & 0 \\
                  \vdots  &  \ddots  &  &   &  D_{d_D-1} & & & \vdots\\
                  0  &  &   \ddots &   &  \vdots & & & -I_m\\
                  C  & 0 & \cdots &  0 &  D_0 & 0 & \cdots & 0\\
                \end{array}
              \right].\label{C2}
\end{eqnarray}
%

\begin{example}
Let $R(\lam) = A(\lam)+ C D(\lam)^{-1}B \in \C^{n \times n}$ with $A(\lam) = A_0+\lam A_1 + \lam^2 A_2 + \lam^3 A_3, A_i \in \C^{n \times n}$ be  a matrix polynomial of degree $3$ and $D(\lam) = D_0+\lam D_1 + \lam^2 D_2, D_i \in \C^{m \times m}$ be a matrix polynomial of degree $2$. Here $d_A > d_D$, $r= 2$ and $d=3$. Let $\sig_1 = (1, 3, 2)$ and $\sig_2 = (2, 3, 1)$ be bijections from $\{0, 1, 2\}$ to $\{1, 2, 3\}$. Then $\mathbb{L}_{\sig_1}(\lam) = \lam \mathbb{M}_3 - \mathbb{M}_0 \mathbb{M}_2 \mathbb{M}_1$ and $\mathbb{L}_{\sig_2}(\lam) = \lam \mathbb{M}_3 - \mathbb{M}_2 \mathbb{M}_0 \mathbb{M}_1$. Then the Fiedler matrices for $G(\lam)$ are given by
$$\mathbb{M}_0 = \left[
                   \begin{array}{ccc|cc}
                    I_n & 0 & 0 & 0 & 0 \\
                     0 & I_n & 0 & 0 & 0 \\
                     0 & 0 & -A_0 & 0 &  B \\
                     \hline
                      0 & 0 & 0 & I_m &  0 \\
                     0 & 0 & -C & 0 & -D_0 \\
                   \end{array}
                 \right] \indent
\mathbb{M}_1 = \left[
                   \begin{array}{ccc|cc}
                    I_n & 0 & 0 &  & \\
                     0 & -A_1 & I_n &  & \\
                     0 & I_n & 0 &  &  \\
                     \hline
                      &  &  & -D_1 & I_m \\
                      &  &  & I_m & 0 \\
                   \end{array}
                 \right]
$$
$$\mathbb{M}_2 = \left[
                   \begin{array}{ccc|cc}
                    -A_2 & I_n & 0 &  & \\
                     I_n & 0 & 0 &  & \\
                     0 & 0 & I_n &  & \\
                     \hline
                      &  &  & I_m & \\
                      &  &  & & I_m \\
                   \end{array}
                 \right]  \indent
\mathbb{M}_3 = \left[
                   \begin{array}{ccc|cc}
                    A_3 &  &  &  & \\
                       & I_n &  &  & \\
                      &  & I_n &  & \\
                     \hline
                      &  &  & D_2 & \\
                      &  &  & & I_m \\
                   \end{array}
                 \right]. $$
Then
$$\mathbb{M}_{\sig_1} = \left[
                   \begin{array}{ccc|cc}
                    -A_2 & -A_1 & I_n & 0 & 0 \\
                     I_n & 0 & 0 & 0 & 0 \\
                     0 &  -A_0 & 0 & B & 0 \\
                     \hline
                     0 & 0 & 0 & -D_1 & I_m \\
                     0 & -C & 0 & -D_0 & 0 \\
                   \end{array}
                 \right].$$
By using the commutativity relation it is easy to check that $\mathbb{L}_{\sig_1}(\lam) = \mathbb{L}_{\sig_2}(\lam). $
\end{example}

\begin{example}
Let $G(\lam) = A(\lam)+ C D(\lam)^{-1}B \in \C^{n \times n}$ with $A(\lambda) = A_0 + \lambda A_1 + \lambda^2A_2 + \lambda^3A_3,  \, A_i \in \mathbb{C}^{n\times n}$ and  $D(\lambda) = D_0 + \lambda D_1 + \lambda^2 D_2 + \lambda^3 D_3 + \lambda^4 D_4, \, D_i \in \mathbb{C}^{m\times m}.$ Here, $d_A = 3$, $d_D=4$, $d_A<d_D$ and $r= 3$,  $d=4$. Consider $\mathbb{L}_{\sig}(\lam) = \lam \mathbb{M}_4 - \mathbb{M}_2 \mathbb{M}_0 \mathbb{M}_1\mathbb{M}_3$. Then the Fiedler matrices for $G(\lam)$ are given by
{\tiny
\[\mathbb{M}_0= \left[\begin{array}{ccc|cccc} I_n & 0 &0&0&0&0&0 \\ 0&I_n &0&0&0&0&0 \\ 0&0&-A_0 & 0&0&0&B \\ \hline 0&0&0& I_m & 0&0&0 \\ 0&0&0&0&I_m & 0&0 \\ 0&0&0&0&0&I_m&0 \\ 0&0&-C&0&0&0&-D_0 \end{array}\right],\ \mathbb{M}_1 = \left[\begin{array}{ccc|cccc}I_n & 0 &0&0&0&0&0 \\ 0&-A_1&I_n&0&0&0&0 \\ 0&I_n&0 & 0&0&0&0 \\ \hline 0&0&0& I_m & 0&0&0 \\ 0&0&0&0&I_m & 0&0 \\ 0&0&0&0&0&-D_1&I_m \\ 0&0&0&0&0&I_m&0\end{array}\right] \]
\[\mathbb{M}_2 = \left[\begin{array}{ccc|cccc}-A_2 &I_n&0&0&0&0&0 \\ I_n&0&0&0&0&0&0 \\ 0&0&I_n& 0&0&0&0 \\ \hline 0&0&0& I_m & 0&0&0 \\ 0&0&0&0&-D_2 &I_m&0 \\ 0&0&0&0&I_m&0&0 \\ 0&0&0&0&0&0&I_m\end{array}\right], \ \mathbb{M}_3 = \left[\begin{array}{ccc|cccc} I_n & 0 &0&0&0&0&0 \\ 0&I_n &0&0&0&0&0 \\ 0&0&I_n & 0&0&0&0 \\ \hline 0&0&0& -D_3 &I_m&0&0 \\ 0&0&0&I_m&0 & 0&0 \\ 0&0&0&0&0&I_m&0 \\ 0&0&0&0&0&0&I_m\end{array}\right]\]
$$\mathbb{M}_4 = \left[\begin{array}{ccc|cccc} A_3 & 0 &0&0&0&0&0 \\ 0&I_n &0&0&0&0&0 \\ 0&0&I_n & 0&0&0&0 \\ \hline 0&0&0& D_4 & 0&0&0 \\ 0&0&0&0&I_m & 0&0 \\ 0&0&0&0&0&I_m&0 \\ 0&0&0&0&0&0&I_m\end{array}\right].$$} Then
$$\mathbb{M}_{\sigma} = \left[\begin{array}{ccc|cccc} -A_2 & -A_1 &I_n&0&0&0&0 \\ I_n&0 &0&0&0&0&0 \\ 0&-A_0& 0& 0&0&B&0 \\ \hline 0&0&0& -D_3 & I_m&0&0 \\ 0&0&0&-D_2&0 & -D_1&I_m \\ 0&0&0&I_m&0&0&0 \\ 0&-C&0&0&0&-D_0&0\end{array}\right]. $$
\end{example}

\begin{example}		
Let $G(\lam) = A(\lam)+ C D(\lam)^{-1}B \in \C^{n \times n}$ with  $A(\lambda) = A_0 + \lambda A_1 + \lambda^2A_2 + \lambda^3A_3$  where $A_i \in \mathbb{C}^{n\times n}$ and  $D(\lambda) = D_0 + \lambda D_1 + \lambda^2 D_2 + \lambda^3 D_3 $ where $D_i \in \mathbb{C}^{m\times m}$. Here, $d_A = 3$, $d_D=3$, $r= 3$ and $d=3$.  Consider $\mathbb{L}_{\sig}(\lam) = \lam \mathbb{M}_3 -\mathbb{M}_{\sigma} = \lam \mathbb{M}_3- \mathbb{M}_2\mathbb{M}_0\mathbb{M}_1$. Then the Fiedler matrices for $G(\lam)$ are given by
{\scriptsize\[\mathbb{M}_0= \left[\begin{array}{ccc|ccc} I_n & 0 &0&0&0&0 \\ 0&I_n &0&0&0&0 \\ 0&0&-A_0 & 0&0&B \\ \hline 0&0&0& I_m & 0&0 \\ 0&0&0&0&I_m & 0 \\ 0&0&C&0&0&-D_0 \end{array}\right],\ \mathbb{M}_1 = \left[\begin{array}{ccc|ccc}I_n & 0 &0&0&0&0 \\ 0&-A_1&I_n&0&0&0 \\ 0&I_n&0 & 0&0&0 \\ \hline 0&0&0& I_m & 0&0 \\ 0&0&0&0&-D_1 & I_m \\ 0&0&0&0&I_m&0\end{array}\right] \]	
	\[\mathbb{M}_2 = \left[\begin{array}{ccc|ccc}-A_2 & I_n&0&0&0&0 \\ I_n & 0 &0&0&0&0 \\ 0&0&I_n&0&0&0 \\ \hline 0&0&0&-D_2&I_m&0 \\ 0&0&0& I_m&0&0 \\ 0&0&0&0&0&I_m\end{array}\right],\ \mathbb{M}_3 = \left[\begin{array}{ccc|ccc}A_3 &0&0&0&0&0 \\ 0&I_n&0&0&0&0 \\ 0&0&I_n & 0&0&0\\ \hline 0&0&0& D_3&0 &0 \\ 0&0&0& 0&I_m&0 \\ 0&0&0&0&0&I_m\end{array} \right]. \]} Then,
	$$\mathbb{M}_{\sigma} = \left[\begin{array}{ccc|ccc}-A_2 & -A_1& I_n &0&0&0 \\ I_n &0&0&0&0&0 \\ 0&-A_0&0&0&B&0 \\ \hline 0&0&0&-D_2 &-D_1 & I_m \\ 0&0&0&I_m &0&0 \\ 0&-C&0&0&-D_0&0\end{array}\right]$$
	\end{example}

Having introduced the basic idea of generating Fiedler pencils for
Rosenbrock system polynomials given in (\ref{smpq}), now we will analyze these constructed pencils.

\begin{theorem}\label{fflptflr}
Let $\mathcal{S}(\lam)$ be given in (\ref{smpq}). Let $d= \max(d_A, d_D)$ and $\sigma :\{0, 1, \ldots, d-1\} \rightarrow \{1, 2, \ldots, d\}$ be a bijection. Let $L_{\sig}(\lam)$, $T_{\sig}(\lam)$ and $\mathbb{L}_{\sig}(\lam)$ be the Fiedler pencils of $A(\lam)$ of degree $d_A$, $D(\lam)$ of degree $d_D$ and $\mathcal{S}(\lam)$, respectively, associated with $\sigma,$ that is, $L_{\sigma}(\lam) := \lam M_{d_A} - M_{\sigma}$, $T_{\sigma}(\lam) := \lam N_{d_D} - N_{\sigma}$ and $\mathbb{L}_{\sigma}(\lam) := \lam \mathbb{M}_{d} - \mathbb{M}_{\sigma}$. If $\sig^{-1} = (\sig_1^{-1}, 0, \sig_2^{-1})$ for some bijections $\sigma_{1}$ and $  \sigma_{2},$ then
$$ \mathbb{L}_{\sigma}(\lam) = \left[
      \begin{array}{c|c}
        L_{\sigma}(\lambda) & -M_{\sigma_{1}}(e_{d_A}e_{d_D}^{T}\otimes B)N_{\sigma_{2}} \\
        \hline
        N_{\sigma_{1}}(e_{d_D}e_{d_A}^{T}\otimes C) M_{\sigma_{2}} & T_{\sigma}(\lambda) \\
      \end{array}
    \right]. $$
Further, if CISS$(\sig) = (c_1, i_1, \ldots, c_l, i_l)$ then
$$\mathbb{L}_{\sig}(\lam) = \left[
                               \begin{array}{c|c}
                               L_{\sig}(\lam) & -e_{d_A} e_{d_D-c_{1}}^{T} \otimes B \\
                               \hline
                               e_{d_D} e_{d_A-c_{1}}^{T} \otimes C & T_{\sigma}(\lambda) \\
                               \end{array}
                               \right], \indent \text{ if } c_1 >0
$$ and $$\mathbb{L}_{\sig}(\lam) = \left[
                               \begin{array}{c|c}
                               L_{\sig}(\lam) & -e_{(d_A-i_{1})}e_{d_D}^{T} \otimes B \\
                               \hline
                               e_{(d_D-i_{1})}e_{d_A}^{T} \otimes C & T_{\sigma}(\lambda) \\
                               \end{array}
                               \right],  \indent \text{ if } c_1 = 0. $$
Thus the map $(\mathrm{Fiedler}(A), \mathrm{Fiedler}(D)) \longrightarrow \mathrm{Fiedler}(\mathcal{S}), $ $ (\lam M_{d_A} -M_{\sig}, \lam N_{d_D} -N_{\sig}) \longmapsto \lam \mathbb{M}_m -\mathbb{M}_{\sig}$ is a bijection, where $\mathrm{Fiedler}(A), \mathrm{Fiedler}(D)$ and $\mathrm{Fiedler}(\mathcal{S})$, respectively, denote the set of Fiedler pencils of $A(\lam)$, $D(\lam)$ and $\mathcal{S}(\lam)$.
\end{theorem}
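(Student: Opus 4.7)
The plan is first to derive the $2\times 2$ block formula by direct multiplication, and then to simplify the off-diagonal couplings using the consecution-inversion structure of $\sigma$. The key structural observation is that every factor $\mathbb{M}_i$ with $i\in\{1,\dots,d-1\}$ is block diagonal of the form $\mathrm{diag}(M_i,N_i)$, with the convention that $M_i:=I_{d_An}$ when $i\geq d_A$ and $N_i:=I_{d_Dm}$ when $i\geq d_D$; only $\mathbb{M}_0$ carries off-diagonal coupling blocks involving $B$ and $C$. Consequently $\mathbb{M}_{\sigma_1}$ and $\mathbb{M}_{\sigma_2}$ are themselves block diagonal, equal to $\mathrm{diag}(M_{\sigma_1},N_{\sigma_1})$ and $\mathrm{diag}(M_{\sigma_2},N_{\sigma_2})$ respectively. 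Expanding $\mathbb{M}_\sigma=\mathbb{M}_{\sigma_1}\mathbb{M}_0\mathbb{M}_{\sigma_2}$ block by block and subtracting from $\lam\mathbb{M}_d=\mathrm{diag}(\lam M_{d_A},\lam N_{d_D})$ then yields the first claimed formula, with diagonal entries $L_\sigma(\lam)$ and $T_\sigma(\lam)$ and off-diagonal entries $-M_{\sigma_1}(e_{d_A}e_{d_D}^T\otimes B)N_{\sigma_2}$ and $N_{\sigma_1}(e_{d_D}e_{d_A}^T\otimes C)M_{\sigma_2}$.

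For the CISS simplification when $c_1>0$, I would use the elementary identities $M_j e_{d_A}=e_{d_A}$ for $j\neq 1$, $M_1 e_{d_A}=e_{d_A-1}$, and the telescoping shift $M_j e_{d_A-j+1}=e_{d_A-j}$ (with analogous statements for $N_j$ acting on $e_{d_D}^T$ from the right and on $e_{d_D}$ from the left), all of which read off directly from the block patterns in Definition~\ref{mifpq}. Because $c_1$ consecutions at the start force $\sigma(0)<\sigma(1)<\cdots<\sigma(c_1)$, the indices $0,1,\dots,c_1$ appear in $\sigma^{-1}$ in increasing order; hence the indices $1,\dots,c_1$ all lie in $\sigma_2^{-1}$ in that relative order, while $\sigma_1^{-1}$ contains only indices $\geq c_1+1$. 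Applying $N_{\sigma_2}$ on the right of $e_{d_D}^T$ then executes the chain of shifts $e_{d_D}^T\mapsto e_{d_D-1}^T\mapsto\cdots\mapsto e_{d_D-c_1}^T$, any interleaved index $j\geq c_1+1$ acting trivially on the basis vector reached at that moment because the active rows of $N_j$ lie strictly below the current row index; correspondingly $M_{\sigma_1}e_{d_A}=e_{d_A}$. Invoking the Kronecker identity $(e_{d_A}e_{d_D}^T)\otimes B=(e_{d_A}\otimes I_n)(e_{d_D}^T\otimes B)$ then collapses the top-right block to $-e_{d_A}e_{d_D-c_1}^T\otimes B$, and the symmetric argument handles the bottom-left block. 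The case $c_1=0$ is treated by the mirror argument with the roles of $\sigma_1$ and $\sigma_2$ swapped and $c_1$ replaced by $i_1$, since the initial inversions now force $\sigma(0)>\sigma(1)>\cdots>\sigma(i_1)$ and push indices $1,\dots,i_1$ into $\sigma_1^{-1}$.

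The bijection claim follows at once, since both $(L_\sigma,T_\sigma)$ and $\mathbb{L}_\sigma$ are canonically indexed by the same $\sigma$, and the formula just proved exhibits $L_\sigma$ and $T_\sigma$ as the top-left and bottom-right blocks of $\mathbb{L}_\sigma$, providing an explicit two-sided inverse. I expect the main technical obstacle to be the CISS bookkeeping in the second step, specifically verifying that the index $c_1+1$ (which breaks the run of consecutions and would, in principle, contribute a spurious shift $e_{d_D-c_1}^T\mapsto e_{d_D-c_1-1}^T$ if placed after the $c_1$-th shift) is forced by the CISS structure to appear before the index $c_1$ in $\sigma_2^{-1}$, so that it acts on some $e_{d_D-k}^T$ with $k<c_1$ where it is inactive; this is exactly what pins the total shift at $c_1$ rather than at a larger value, and is the substantive content of the simplification.
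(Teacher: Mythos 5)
Your proposal is correct and takes essentially the same route as the paper's proof: the same splitting $\mathbb{M}_\sigma=\mathbb{M}_{\sigma_1}\mathbb{M}_0\mathbb{M}_{\sigma_2}$ with $\mathbb{M}_{\sigma_k}=\mathrm{diag}(M_{\sigma_k},N_{\sigma_k})$ yields the $2\times 2$ block formula, and the same shift identities $(e_{d_D}^{T}\otimes I_m)N_1\cdots N_{c_1}=e_{d_D-c_1}^{T}\otimes I_m$ (and their column analogues) give the CISS simplification. The only difference is bookkeeping: the paper first uses the commutation relations $\mathbb{M}_i\mathbb{M}_j=\mathbb{M}_j\mathbb{M}_i$ for $|i-j|>1$ to normalize $\mathbb{M}_{\sigma_2}$ to $\mathbb{M}_1\cdots\mathbb{M}_{c_1}$ (resp.\ $\mathbb{M}_{\sigma_1}$ to $\mathbb{M}_{i_1}\cdots\mathbb{M}_1$) before computing, whereas you leave the high-index factors interleaved and check that they act trivially on the traveling basis row/column --- your observation that the inversion at $c_1$ forces $\mathbb{M}_{c_1+1}$ to precede $\mathbb{M}_{c_1}$ being precisely the fact that licenses the paper's normalization.
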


\begin{proof}
 We have $\mathbb{L}_{\sig}(\lam) = \lam \mathbb{M}_{m} - \mathbb{M}_{\sig} = \lam \mathbb{M}_{m} - \mathbb{M}_{\sig_{1}} \mathbb{M}_{0}\mathbb{M}_{\sig_{2}} $
\begin{align*}  &= {\scriptsize \lam \left[
              \begin{array}{c|c}
               M_{d_A} & 0 \\
                \hline
                0 & N_{d_D} \\
              \end{array}
            \right] - \left[
                        \begin{array}{c|c}
                          M_{\sigma_{1}} & 0  \\
                          \hline
                           0 &  N_{\sigma_{1}}\\
                        \end{array}
                      \right]\left[
                        \begin{array}{c|c}
                          M_0 & (e_{d_A} e_{d_D}^{T}) \otimes B \\
                          \hline
                          -(e_{d_D} e_{d_A}^{T})\otimes C & N_0 \\
                        \end{array}
                      \right]\left[
                                      \begin{array}{c|c}
                                        M_{\sigma_{2}} & 0 \\
                                        \hline
                                        0 & N_{\sigma_{2}} \\
                                      \end{array}
                                    \right]} \\
& =  \lam \left[
              \begin{array}{c|c}
               M_{m} & 0 \\
                \hline
                0 & N_{d_D} \\
              \end{array}
            \right]
- \left[
   \begin{array}{c|c}
   M_{\sigma_{1}} M_{0} M_{\sigma_{2}} & M_{\sigma_{1}}(e_{d_A}e_{d_D}^{T}\otimes B)N_{\sigma_{2}} \\
    \hline
     -N_{\sigma_{1}}(e_{d_D}e_{d_A}^{T}\otimes C) M_{\sigma_{2}}& N_{\sigma_{1}} N_{0} N_{\sigma_{2}}  \\
      \end{array}
      \right] \\
& = \left[
      \begin{array}{c|c}
        L_{\sigma}(\lambda) & -M_{\sigma_{1}}(e_{d_A}e_{d_D}^{T}\otimes B)N_{\sigma_{2}} \\
        \hline
        N_{\sigma_{1}}(e_{d_D}e_{d_A}^{T}\otimes C) M_{\sigma_{2}} & T_{\sigma}(\lambda) \\
      \end{array}
    \right].
\end{align*}
Now suppose that CISS$(\sig) = (c_1, i_1, \ldots, c_l, i_l)$.

Case $I:$ Suppose that $c_1 > 0$. Then by commutativity relation we have $\mathbb{M}_{\sig} = \mathbb{M}_{\sig_1}\mathbb{M}_0 \mathbb{M}_1 \cdots \mathbb{M}_{c_1}$ with $c_1+1 \in \sig_1$. Thus $\mathbb{M}_{\sig} = \mathbb{M}_{\sig_1} \mathbb{M}_0 \mathbb{M}_{\sig_2}$, where $\mathbb{M}_{\sig_2} = \mathbb{M}_1 \cdots \mathbb{M}_{c_1}$. Hence
\begin{align*} \mathbb{M}_{\sig}  &= \left[
       \begin{array}{c|c}
         M_{\sig_1} &  \\
         \hline
          & N_{\sig_1} \\
       \end{array}
     \right]\left[
                        \begin{array}{c|c}
                          M_0 & (e_{d_A} e_{d_D}^{T}) \otimes B \\
                          \hline
                          -(e_{d_D} e_{d_A}^{T})\otimes C & N_0 \\
                        \end{array}
                      \right]\left[
                                      \begin{array}{c|c}
                                        M_{\sigma_{2}} & 0 \\
                                        \hline
                                        0 & N_{\sigma_{2}} \\
                                      \end{array}
                                    \right]\\
 & = \left[
   \begin{array}{c|c}
   M_{\sigma_{1}} M_{0} M_{\sigma_{2}} & M_{\sigma_{1}}(e_{d_A}e_{d_D}^{T}\otimes B)N_{\sigma_{2}} \\
    \hline
     -N_{\sigma_{1}}(e_{d_D}e_{d_A}^{T}\otimes C) M_{\sigma_{2}}& N_{\sigma_{1}} N_{0} N_{\sigma_{2}}  \\
      \end{array}
      \right].
\end{align*}
Since $j \in \sig_1$ implies that $j \geq c_1+1$, we have $M_{\sig_1} = \left[
                                                                                  \begin{array}{c|c}
                                                                                  * &  \\
                                                                                  \hline
                                                                                  & I_{c_1n} \\
                                                                                  \end{array}
                                                                                  \right]
$ and $N_{\sig_1} = \left[
                                                                                  \begin{array}{c|c}
                                                                                  * &  \\
                                                                                  \hline
                                                                                  & I_{c_1m} \\
                                                                                  \end{array}
                                                                                  \right]
$. This shows that $M_{\sig_1} (e_{d_{A}} \otimes I_n) = e_{d_A} \otimes I_n$ and $N_{\sig_1} (e_{d_{D}} \otimes I_m) = e_{d_D} \otimes I_m$. So, we have
$ M_{\sig_1} (e_{d_A} \otimes B) = e_{d_A} \otimes B$. Next, we have $N_{\sigma_{1}}(e_{d_D}e_{d_A}^{T}\otimes C) M_{\sigma_{2}} = N_{\sigma_{1}}(e_{d_D} \otimes I_m)(e_{d_A}^{T}\otimes C) M_{\sigma_{2}} $ and $M_{\sigma_{1}}(e_{d_A}e_{d_D}^{T}\otimes B)N_{\sigma_{2}} = M_{\sigma_{1}}(e_{d_A} \otimes B )( e_{d_D}^{T}\otimes I_{m})N_{\sigma_{2}}$. Now, we have
\begin{align*} ( e_{d_A}^{T} \otimes I_n) M_1 &= (e_{d_A}^{T} \otimes I_n) \left[
                                                               \begin{array}{ccc}
                                                                 I_{(d_{A}-2)n} &  & \\
                                                                  & -A_1 & I_n  \\
                                                                  & I_n & 0  \\
                                                               \end{array}
                                                             \right]
                                                = (e_{d_A-1}^{T} \otimes I_n),\\
(e_{d_A}^{T} \otimes I_n) M_1 M_2 &= (e_{d_A-1}^{T} \otimes I_n) \left[
                                                               \begin{array}{cccc}
                                                                 I_{(d_{A}-3)n} &  &  &  \\
                                                                  & -A_2 & I_n &   \\
                                                                  & I_n & 0 &    \\
                                                                  &  &  & I_n   \\
                                                               \end{array}
                                                             \right]
                                                = (e_{d_A-2}^{T} \otimes I_n),\\
 \end{align*} and so on. Thus
$(e_{d_A}^{T} \otimes I_n) M_1 M_2 \cdots M_{c_1} = (e_{d_A-c_1}^{T} \otimes I_n)$.
Hence $(e_{d_A}^{T} \otimes I_n)M_{\sig_2} = (e_{d_A-c_1}^{T} \otimes I_n)$ and $(-(e_{d_A-c_1}^{T} \otimes C)M_{\sig_2} = -(e_{d_A-c_1}^{T} \otimes C)$.
Similarly, we have
\begin{align*} ( e_{d_D}^{T} \otimes I_m) N_1 &= (e_{d_D}^{T} \otimes I_m) \left[
                                                               \begin{array}{ccc}
                                                                     I_{(d_{D}-2)m} &  \\
                                                                    & -D_1 & I_m  \\
                                                                   & I_m & 0 \\
                                                               \end{array}
                                                             \right]
                                                = (e_{d_D-1}^{T} \otimes I_m),\\
(e_{d_D}^{T} \otimes I_m) N_1 N_2 &= (e_{d_D-1}^{T} \otimes I_m) \left[
                                                               \begin{array}{cccc}
                                                                     I_{(d_{D}-3)m} & & & \\
                                                                    & -D_2  & I_m  & \\
                                                                    &  I_m & 0 &  \\
                                                                    & &  & I_m  \\
                                                               \end{array}
                                                             \right]
                                                = (e_{d_D-2}^{T} \otimes I_m),\\
 \end{align*} and so on. Thus
$(e_{d_D}^{T} \otimes I_m) N_1 N_2 \cdots N_{c_1} = (e_{d_D-c_1}^{T} \otimes I_m)$.
Hence $(e_{d_D}^{T} \otimes I_m)N_{\sig_2} = (e_{d_D-c_1}^{T} \otimes I_m)$.
Now, we have $N_{\sigma_{1}}(e_{d_D}e_{d_A}^{T} \otimes I_n)M_{\sig_2} = N_{\sigma_{1}}(e_{d_D} \otimes I_m)(e_{d_A}^{T}\otimes C) M_{\sigma_{2}} = (e_{d_D}e_{d_A-c_1}^{T} \otimes I_n)$ and $-(N_{\sigma_{1}}e_{d_D}e_{d_A}^{T} \otimes C)M_{\sig_2} = -(e_{d_D}e_{d_A-c_1}^{T} \otimes C)$.
Similarly, $M_{\sigma_{1}}(e_{d_A}e_{d_D}^{T}\otimes B)N_{\sigma_{2}} =  (e_{d_A}e_{d_D-c_1}^{T} \otimes B)$. Consequently, we have
$$\mathbb{L}_{\sig}(\lam) = \lam \mathbb{M}_{m} - \mathbb{M}_{\sig} = \left[
                               \begin{array}{c|c}
                               L_{\sig}(\lam) & -e_{d_A} e_{d_D-c_{1}}^{T} \otimes B \\
                               \hline
                               e_{d_D} e_{d_A-c_{1}}^{T} \otimes C & T_{\sigma}(\lambda) \\
                               \end{array}
                               \right].$$

Case $II:$ Suppose that $c_1 = 0$. Then $\sig$ has $i_1$ inversions at $0$. Hence by commutativity relations we have $\mathbb{M}_{\sig} = \mathbb{M}_{i_1} \cdots \mathbb{M}_1 \mathbb{M}_0 \mathbb{M}_{\sig_2} =:\mathbb{M}_{\sig_1}\mathbb{M}_0 M_{\sig_2} $ with $i_1+1 \in \sig_2$.  Hence
\begin{align*} \mathbb{M}_{\sig}  &= \left[
       \begin{array}{c|c}
         M_{\sig_1} &  \\
         \hline
          & N_{\sig_1} \\
       \end{array}
     \right]\left[
                        \begin{array}{c|c}
                          M_0 & (e_{d_A} e_{d_D}^{T}) \otimes B \\
                          \hline
                          -(e_{d_D} e_{d_A}^{T})\otimes C & N_0 \\
                        \end{array}
                      \right]\left[
                                      \begin{array}{c|c}
                                        M_{\sigma_{2}} & 0 \\
                                        \hline
                                        0 & N_{\sigma_{2}} \\
                                      \end{array}
                                    \right]\\
 & = \left[
   \begin{array}{c|c}
   M_{\sigma_{1}} M_{0} M_{\sigma_{2}} & M_{\sigma_{1}}(e_{d_A}e_{d_D}^{T}\otimes B)N_{\sigma_{2}} \\
    \hline
     -N_{\sigma_{1}}(e_{d_D}e_{d_A}^{T}\otimes C) M_{\sigma_{2}}& N_{\sigma_{1}} N_{0} N_{\sigma_{2}}  \\
      \end{array}
      \right].\end{align*}
Since $j \in \sig_2$ implies that $j \geq i_1+1$, we have $M_{\sig_2} = \left[
                         \begin{array}{c|c}
                         * &  \\
                         \hline
                         & I_{i_1n} \\
                         \end{array}
                         \right]
$ and $N_{\sig_2} = \left[
                         \begin{array}{c|c}
                         * &  \\
                         \hline
                         & I_{i_1m} \\
                         \end{array}
                         \right]
$. This shows that $ (e_{d_A}^{T} \otimes I_n)M_{\sig_2} = e_{d_A}^{T} \otimes I_n$ and $ (e_{d_D}^{T} \otimes I_m)N_{\sig_2} = e_{d_D}^{T} \otimes I_m$. Hence $ (-e_{d_A}^{T} \otimes C)M_{\sig_2} = -e_{d_A}^{T} \otimes C$. Next, we have
\begin{align*}  M_1 (e_{d_A} \otimes I_n) &= \left[
                                     \begin{array}{ccc}
                                     I_{(m-2)n} &  &  \\
                                     & -A_1 & I_n \\
                                     & I_n & 0 \\
                                     \end{array}
                                     \right]  (e_{d_A} \otimes I_n)= (e_{{d_A}-1} \otimes I_n),\\
M_2 M_1 (e_{d_A} \otimes I_n) &= \left[
                                                               \begin{array}{cccc}
                                                                 I_{({d_A}-3)n} &  & & \\
                                                                  & -A_2 & I_n & \\
                                                                  & I_n & 0 & \\
                                                                   &  &  & I_n \\
                                                               \end{array}
                                                             \right] (e_{{d_A}-1} \otimes I_n)
                                                             = (e_{{d_A}-2} \otimes I_n). \end{align*}  Thus
$ M_{i_1}  \cdots M_2 M_1 (e_{d_{A}} \otimes I_n)= (e_{d_{A}-i_1} \otimes I_n)$. Hence $M_{\sig_1}(e_{d_{A}} \otimes I_n) = (e_{(d_{A}-i_1)} \otimes I_n)$ and $M_{\sig_1}(e_{d_{A}} \otimes B) = (e_{(d_{A}-i_1)} \otimes B)$.
Similarly, we have
\begin{align*}  N_1 (e_{d_D} \otimes I_m) &= \left[
                                     \begin{array}{ccc}
                                     I_{(d_D-2)m} &  &  \\
                                     & -D_1 & I_m \\
                                     & I_m & 0 \\
                                     \end{array}
                                     \right]  (e_{d_D}\otimes I_m)= (e_{{d_D}-1} \otimes I_m),\\
N_2 N_1 (e_{d_D} \otimes I_m) &= \left[
                                                               \begin{array}{cccc}
                                                                 I_{(d_D-3)m} &  & & \\
                                                                  & -D_2 & I_m & \\
                                                                  & I_m & 0 & \\
                                                                   &  &  & I_m \\
                                                               \end{array}
                                                             \right] (e_{d_D-1} \otimes I_m)
                                                             = (e_{d_D-2} \otimes I_m).
                                                             \end{align*}
Thus
$ N_{i_1}  \cdots N_2 N_1 (e_{d_{D}} \otimes I_m)= (e_{d_{D}-i_1} \otimes I_m)$. Hence $N_{\sig_1}(e_{d_{D}} \otimes I_m) = (e_{(d_{D}-i_1)} \otimes I_m)$.
Now, we have $N_{\sigma_{1}}(e_{d_D}e_{d_A}^{T} \otimes I_n)M_{\sig_2} = N_{\sigma_{1}}(e_{d_D} \otimes I_m)(e_{d_A}^{T}\otimes C) M_{\sigma_{2}} = (e_{d_D - i_1}e_{d_A}^{T} \otimes I_n)$ and $-(N_{\sigma_{1}}e_{d_D}e_{d_A}^{T} \otimes C)M_{\sig_2} = -(e_{d_D- i_1}e_{d_A}^{T} \otimes C)$.
Similarly, $M_{\sigma_{1}}(e_{d_A}e_{d_D}^{T}\otimes B)N_{\sigma_{2}} = (e_{d_A-i_1}e_{d_D}^{T} \otimes B)$.
Consequently, we have
$$\mathbb{L}_{\sig}(\lam) = \lam \mathbb{M}_{m} - \mathbb{M}_{\sig} = \left[
                               \begin{array}{c|c}
                               L_{\sig}(\lam) & -e_{(d_A-i_{1})}e_{d_D}^{T} \otimes B \\
                               \hline
                               e_{(d_D-i_{1})}e_{d_A}^{T} \otimes C & T_{\sigma}(\lambda) \\
                               \end{array}
                               \right]. $$
Note that for each $i, j \in \sig$, we have $M_i M_j = M_j M_i, \, N_i N_j = N_j N_i \Leftrightarrow \mathbb{M}_i \mathbb{M}_j = \mathbb{M}_j \mathbb{M}_i$. Hence it follows that $\# (\mathrm{Fiedler}(A), \mathrm{Fiedler}(D)) = \#(\mathrm{Fiedler}(\mathcal{S}))$. This completes the proof.
\end{proof}

\begin{theorem}\label{cowi}
Let $\mathcal{S}(\lam)$ be in (\ref{smpq}) with $A(\lam) = \sum\limits_{i=0}^{d_{A}}\lam^{i}A_i, A_i \in \C^{n\times n} $, $\sum \limits_{i=0}^{d_D}\lam^{i}D_{i}, D_i \in \C^{m\times m}$. Suppose that $d_A > d_D. $ Let $\sigma: \{0, 1, \ldots, d_A-1\} \rightarrow \{1, 2, \ldots, d_A\}$ be a bijection. The following algorithm constructs a sequence of matrices $\{\mathbb{W}_0, \mathbb{W}_1, \ldots, \mathbb{W}_{d_A-2}\}, $ where each matrix $\mathbb{W}_i$ for $i = 1, 2, \ldots, d_A-2$ is partitioned into blocks in such a way that the blocks of $\mathbb{W}_{i-1}$ are blocks of $W_i. $
\begin{algorithm}[H]
\caption{Construction of $\mathbb{M}_{\sigma}$ for $\mathbb{L}_{\sigma}(\lam) := \lam \mathbb{M}_{d_A}- \mathbb{M}_{\sigma}$.}
\label{alg1}

\textbf{Input}: {\scriptsize$\mathcal{S}(\lam) = \left[
                              \begin{array}{c|c}
                                \sum \limits_{i=0}^{d_A}\lam^{i}A_{i} & -B \\
                                \hline
                                C & \sum \limits_{i=0}^{d_D}\lam^{i}D_{i}\\
                              \end{array}
                            \right]$} and a bijection $\sigma :\{0, 1, \ldots, d_A-1\} \rar \{1, 2, \ldots, d_A\} $. \\
\textbf{Output}: { $\mathbb{M}_{\sigma}$ }
\begin{algorithmic}

\If{$\sigma$ has a consecution at $0$}
    \State {\scriptsize$\mathbb{W}_0 := \left[
                    \begin{array}{cc|cc}
                      -A_{1} & I_n & 0 & 0 \\
                      -A_0 & 0 & B & 0 \\
                      \hline
                      0 & 0 & -D_1 & I_m \\
                      -C & 0 & -D_0 & 0 \\
                    \end{array}
                  \right]$}

\Else
    \State {\scriptsize$\mathbb{W}_0 := \left[
                    \begin{array}{cc|cc}
                      -A_{1} & -A_0 & 0 & B \\
                       I_n & 0 & 0 & 0 \\
                      \hline
                      0 & -C & -D_1 & -D_0 \\
                      0 & 0 & I_m & 0 \\
                    \end{array}
                  \right]$}

\EndIf \\
If $d_{A} > d_{D}$  \\

\For{$i = 1:d_{D}-2$}

\If{$\sigma$ has a consecution at $i$}
    \State {\scriptsize $\mathbb{W}_i :=  \left[
       \begin{array}{ccc|c}
        -A_{i+1} & I_{n} &  0  & \\
        \mathbb{W}_{i-1}(1:i+1,1) & 0 & \mathbb{W}_{i-1}(1:i+1,2:i+1) & W_{i2}  \\
         \hline
        	0 & 0 &  0 & \\
        \mathbb{W}_{i-1}(3:i+3, 1) & 0 & \mathbb{W}_{i-1}(3:i+3,2:i+1)    & W_{i4}  \\
       \end{array}
     \right]$},   where  \\
  {\scriptsize$W_{i2}= \left[
     \begin{array}{ccc}
     	0 & 0 &  0 \\
     	\mathbb{W}_{i-1}(1:i+1, i+2) & 0 & \mathbb{W}_{i-1}(1:i+1, i+3:2i+2)  \\
     \end{array}
     \right]$}, \\
  {\scriptsize $W_{i4}= \left[
  \begin{array}{ccc}
  -D_{i+1} & I_{m} &  0     \\
  	\mathbb{W}_{i-1}(3:i+3,i+2) & 0 & \mathbb{W}_{i-1}(3:i+3, i+3:2i+2)  \\
  \end{array}
  \right]$.}

\Else
    \State {\scriptsize$\mathbb{W}_i := \left[
           \begin{array}{cc|cc}
             -A_{i+1} & \mathbb{W}_{i-1}(1,1:i+1) & 0 & \mathbb{W}_{i-1}(1,3:i+3) \\
             I_{n} & 0  & 0 &  0 \\
             0 & \mathbb{W}_{i-1}(2:i+1,1:i+1) & 0 & \mathbb{W}_{i-1}(2:i+1,3:i+3) \\
             \hline
             0 & \mathbb{W}_{i-1}(i+2,1:i+1) & -D_{i+1} & \mathbb{W}_{i-1}(i+2,3:i+3) \\
             0 &  0 & I_{m} &   0    \\
             0 & \mathbb{W}_{i-1}(i+3:2i+2,1:i+1) & 0  & \mathbb{W}_{i-1}( i+3:2i+2,3:i+3)  \\
           \end{array}
         \right] $}
\EndIf

\EndFor

\For{$i = d_{D}-1: d_{A}-2$}

\If{$\sigma$ has a consecution at $i$}
    \State{\scriptsize $\mathbb{W}_i :=  \left[
       \begin{array}{cccc}
         -A_{i+1} & I_{n} & 0 & 0  \\
         \mathbb{W}_{i-1}(:,1) & 0 & \mathbb{W}_{i-1}(:,2:i+1) & \mathbb{W}_{i-1}(:,i+2:2i+2)  \\
       \end{array}
     \right]$}

\Else
    \State {\scriptsize$\mathbb{W}_i := \left[
           \begin{array}{cc}
             -A_{i+1} & \mathbb{W}_{i-1}(1,:)   \\
             I_{n} & 0    \\
             0 & \mathbb{W}_{i-1}(2:i+1, :)  \\
             0 & \mathbb{W}_{i-1}(i+2:2i+2, :)   \\
           \end{array}
         \right] $}
\EndIf

\EndFor

\State $\mathbb{M}_{\sigma} := \mathbb{W}_{d_{A}-2}$

\end{algorithmic}
\end{algorithm}
\end{theorem}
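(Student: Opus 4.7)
The plan is to prove by induction on $i$ that $\mathbb{W}_i$ equals the partial Fiedler product $\mathbb{M}_{\pi_i^{-1}(1)} \mathbb{M}_{\pi_i^{-1}(2)} \cdots \mathbb{M}_{\pi_i^{-1}(i+2)}$, where $\pi_i$ denotes the bijection on $\{0, 1, \ldots, i+1\}$ induced by the relative ordering of these indices in $\sigma$. Setting $i = d_A - 2$ then yields $\mathbb{W}_{d_A - 2} = \mathbb{M}_\sigma$, which is exactly the output of Algorithm~\ref{alg1}.

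For the base case $i = 0$, I would directly multiply out $\mathbb{M}_0$ and $\mathbb{M}_1$ using the explicit block forms in Definition~\ref{mifpq} and check that $\mathbb{M}_1 \mathbb{M}_0$ equals the first choice of $\mathbb{W}_0$ in the algorithm (consecution at $0$) while $\mathbb{M}_0 \mathbb{M}_1$ equals the second (inversion at $0$). For the inductive step, the key observation is that because $\sigma$ has either a consecution or an inversion at $i$, the commutativity relations $\mathbb{M}_j \mathbb{M}_k = \mathbb{M}_k \mathbb{M}_j$ for $|j - k| > 1$ allow the factor $\mathbb{M}_{i+1}$ to be moved to the extreme left (consecution case) or the extreme right (inversion case) of the partial product representing $\mathbb{W}_i$, giving $\mathbb{W}_i = \mathbb{M}_{i+1} \mathbb{W}_{i-1}$ or $\mathbb{W}_i = \mathbb{W}_{i-1} \mathbb{M}_{i+1}$ respectively. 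It then remains to expand these two products using the block structure of $\mathbb{M}_{i+1}$ and verify that the resulting matrix matches the block-update rule stated in the algorithm.

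The reason the algorithm splits into two separate for-loops is the transition in the block structure of $\mathbb{M}_{i+1}$ as $i$ crosses $d_D - 1$. For $i \leq d_D - 2$ we have $i + 1 \leq d_D - 1 < d_D$, so $\mathbb{M}_{i+1}$ contains genuine $-A_{i+1}$ and $-D_{i+1}$ blocks acting on both the upper-left and lower-right diagonal pieces; the first for-loop must therefore update both block regions of $\mathbb{W}_{i-1}$ simultaneously. For $d_D - 1 \leq i \leq d_A - 2$ we have $i + 1 \geq d_D$, so by Definition~\ref{mifpq} one has $\mathbb{M}_{i+1} = \mathrm{diag}(M_{i+1}, I_{d_D m})$; only the upper-left block of $\mathbb{W}_{i-1}$ is modified, which is precisely what the second for-loop performs.

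The main obstacle is the meticulous bookkeeping of the block-index ranges $\mathbb{W}_{i-1}(a{:}b,\,c{:}d)$ in order to verify that the newly inserted rows and columns from $\mathbb{M}_{i+1}$ land in the right positions, and that the off-diagonal coupling blocks of the form $e_{d_A} e_{d_D - c_1}^T \otimes B$ and $e_{d_D} e_{d_A - c_1}^T \otimes C$ identified in Theorem~\ref{fflptflr} migrate correctly as successive multiplications are applied. The column-shift identities $(e_{d_A}^T \otimes I_n) M_1 M_2 \cdots M_k = e_{d_A - k}^T \otimes I_n$ and $(e_{d_D}^T \otimes I_m) N_1 N_2 \cdots N_k = e_{d_D - k}^T \otimes I_m$ already established in the proof of Theorem~\ref{fflptflr}, together with their row-analogues, are the main tool for tracking the positions of the $-B$ and $C$ couplings throughout the induction.
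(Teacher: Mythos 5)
Your strategy is, in substance, the proof the paper itself appeals to: the paper disposes of this theorem with ``induction on the degree'' plus a citation of Theorem~3.11 of \cite{rafinami}, and what you describe --- maintain the invariant that $\mathbb{W}_i$ represents the partial product of $\mathbb{M}_0,\dots,\mathbb{M}_{i+1}$ taken in the order induced by $\sigma$, use $\mathbb{M}_j\mathbb{M}_k=\mathbb{M}_k\mathbb{M}_j$ for $|j-k|>1$ to reduce each step to a single multiplication by $\mathbb{M}_{i+1}$, and explain the two loops by the degeneration $\mathbb{M}_{i+1}=\diag(M_{i+1},I_{d_D m})$ once $i+1\ge d_D$ --- is exactly that induction, spelled out. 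One formal caveat: $\mathbb{W}_i$ cannot literally \emph{equal} the partial product, since their sizes differ; it is the trailing core left after deleting the leading identity blocks $I_{(d_A-i-2)n}$ (and $I_{(d_D-i-2)m}$ during the first loop), so the invariant must be stated through this embedding.

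There is, however, a concrete error that breaks both your base case and your inductive step: your dictionary between consecutions/inversions and left/right multiplication is inverted. By Definition~\ref{dci}, a consecution at $i$ means $\sigma(i)<\sigma(i+1)$, and $\sigma(j)$ is the \emph{position} of $\mathbb{M}_j$ in $\mathbb{M}_\sigma$; hence a consecution places $\mathbb{M}_{i+1}$ to the \emph{right} of $\mathbb{M}_i$, and since $\mathbb{M}_{i+1}$ commutes with every $\mathbb{M}_j$, $j\le i-1$, it can be pushed to the extreme right, giving $\mathbb{W}_i=\mathbb{W}_{i-1}\mathbb{M}_{i+1}$; an inversion gives $\mathbb{W}_i=\mathbb{M}_{i+1}\mathbb{W}_{i-1}$ --- the opposite of what you assert. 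Likewise in the base case: block-multiplying the matrices of Definition~\ref{mifpq} shows that it is $\mathbb{M}_0\mathbb{M}_1$, not $\mathbb{M}_1\mathbb{M}_0$, whose core is the consecution branch of $\mathbb{W}_0$, while $\mathbb{M}_1\mathbb{M}_0$ yields the inversion branch. This is also visible in the algorithm itself: the consecution branches redistribute the \emph{columns} of $\mathbb{W}_{i-1}$ and adjoin a new block row $[-A_{i+1},\,I_n,\,0,\dots]$, the footprint of right multiplication, whereas the inversion branches redistribute the \emph{rows}, the footprint of left multiplication. Carried through, your rule builds the factors in reversed order: for the paper's Example~3.1, $\sigma_1=(1,3,2)$ (consecution at $0$, inversion at $1$), it produces the core of $\mathbb{M}_1\mathbb{M}_0\mathbb{M}_2$, whose first block row is $[-A_2,\,I_n,\,0,\,0,\,0]$, rather than $\mathbb{M}_{\sigma_1}=\mathbb{M}_0\mathbb{M}_2\mathbb{M}_1$, whose first block row is $[-A_2,\,-A_1,\,I_n,\,0,\,0]$; the two disagree already in the $(1,2)$ block. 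The remedy is simply to interchange the two cases throughout (consecution $\leftrightarrow$ right multiplication, inversion $\leftrightarrow$ left multiplication); with that correction, and with the bookkeeping via the shift identities you quote from Theorem~\ref{fflptflr}, your outline goes through and coincides with the paper's intended argument.
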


\begin{proof}
We prove the result by induction on the degree $d_A= \max(d_A, d_D)$.  Then the rest of the proof follows directly from proof of Theorem~3.11 in \cite{rafinami}.
\end{proof}

\begin{theorem}
Let $\mathcal{S}(\lam)$ be in (\ref{smpq}) with $A(\lam) = \sum\limits_{i=0}^{d_{A}}\lam^{i}A_i, A_i \in \C^{n\times n} $, $\sum \limits_{i=0}^{d_D}\lam^{i}D_{i}, D_i \in \C^{m\times m}$. Suppose that $d_A < d_D. $ Let $\sigma: \{0, 1, \ldots, d_D-1\} \rightarrow \{1, 2, \ldots, d_D\}$ be a bijection. The following algorithm constructs a sequence of matrices $\{\mathbb{W}_0, \mathbb{W}_1, \ldots, \mathbb{W}_{d_D-2}\}, $ where each matrix $\mathbb{W}_i$ for $i = 1, 2, \ldots, d_D-2$ is partitioned into blocks in such a way that the blocks of $\mathbb{W}_{i-1}$ are blocks of $W_i. $
\begin{algorithm}[H]
\caption{Construction of $\mathbb{M}_{\sigma}$ for $\mathbb{L}_{\sigma}(\lam) := \lam \mathbb{M}_{d_D}- \mathbb{M}_{\sigma}$.}

\textbf{Input:} $\mathcal{S}(\lambda) = \left[\begin{array}{c|c} \sum_{i=0}^{d_A}\lambda^i A_i & -B\\ \hline C & \sum_{i=0}^{d_D}\lambda^i D_i \end{array}\right]$ and a bijection $\sigma : \{0,1,\ldots ,d_D-1\}\to \{1,2,\ldots,d_D\}$

\textbf{Output:} $\mathbb{M}_{\sigma}$

\begin{algorithmic}

\If{$\sigma$ has a consecution at $0$ }
\State \scriptsize $\mathbb{W}_0=\left[\begin{array}{cc|cc}
	-A_1&I_n&0 &0\\ -A_0&0&B&0 \\ \hline 0&0&-D_1 &I_m \\ -C & 0&I_m & 0
\end{array}\right]$

\Else
\State \scriptsize$\mathbb{W}_0 =\left[ \begin{array}{cc|cc} -A_1& A_0 & 0 & B \\ I_n & 0 &0 &0 \\ \hline 0&-C &-D_1 &-D_0 \\ 0&0&I_m &0 \end{array}\right]$	

\EndIf

If $d_D>d_A$ \\
\For{ $i = 1:d_A-2$ }
\If {$\sigma$ has a consecution at $i$ }
\State {\scriptsize
$\mathbb{W}_i := \left[\begin{array}{ccc|ccc}-A_{i+1}&I_n&\\ \mathbb{W}_{i-1}(1:i+1,1)&0&\mathbb{W}_{i-1}(1:i+1,2:i+1)&W_{i2}\\ \hline 0&0&0& \\ \mathbb{W}_{i-1}(3:i+3,1)&0&\mathbb{W}_{i-1}(3:i+3,2:i+1)&W_{i4}\end{array}\right], \text{where}$}

\scriptsize$W_{i2}=\left[\begin{array}{ccc}0&0&0\\ \mathbb{W}_{i-1}(1:i+1,i+2)&0&\mathbb{W}_{i-1}(1:i+1,i+3:2i+2)\end{array}\right]$

\scriptsize{$W_{i4}=\left[\begin{array}{ccc}-D_{i+1} I_m&0&0 \\\mathbb{W}_{i-1}(3:i+3,i+2)&0& \mathbb{W}_{i-1}(3:i+3,i+3:2i+2) \end{array}\right]$}
\Else
\State {\scriptsize$\mathbb{W}_i =\left[\begin{array}{cc|cc} -A_{i+1}& \mathbb{W}_{i-1}(1,1:i+1)& 0&\mathbb{W}_{i-1}(1,3:i+3)\\I_n&0&0&0\\ 0& \mathbb{W}_{i-1}(2:i+1,1:i+1)&0&\mathbb{W}_{i-1}(2:i+1,3:i+3)\\ \hline 0&\mathbb{W}_{i-1}(i+2,1:i+1)&-D_{i+1}&\mathbb{W}_{i-1}(i+2,3:i+3)\\0&0&I_m&0\\0&\mathbb{W}_{i-1}(i+3:2i+2,1:i+1)&0&\mathbb{W}_{i-1}(i+3:2i+2,3:i+3)
\end{array}\right]$}

\EndIf
\EndFor

\For {$i=d_A -1:d_D -2 $ }
\If {$\sigma$ has a consecution at $i$}
\State {\tiny
$\mathbb{W}_i = \left[\begin{array}{c|ccc}\mathbb{W}_{i-1}(1:i+1,1:i+1)&0&0&\mathbb{W}_{i-1}(1:i+1,2i+1:2i+2)\\ \hline 0&-D_{i+1}&I_m&0\\\mathbb{W}_{i-1}(i+2:2i+2,i:i+1)&\mathbb{W}_{i-1}(i+2:2i+2,i+2)&0&\mathbb{W}_{i-1}(i+2:2i+2,2i+1:2i+2)\end{array}\right]$}

\Else
\State {\scriptsize$\mathbb{W}_i = \left[\begin{array}{c|cc}\mathbb{W}_{i-1}(1:i+1,1:i+1)&0&\mathbb{W}_{i-1}(1:i+1,i+2:2i+2) \\ \hline 0&-D_{i+1}&\mathbb{W}_{i-1}(i+2,i+2:2i+2)\\0&I_m&0\\\mathbb{W}_{i-1}(2i+1:2i+2,1:i+1) &0&\mathbb{W}_{i-1}(2i+1:2i+2,i+2:2i+2)\end{array}\right]$} \\
\EndIf

\EndFor

\State $\mathbb{M}_{\sigma} := \mathbb{W}_{d_{D}-2}$
 \end{algorithmic}
\end{algorithm}
\end{theorem}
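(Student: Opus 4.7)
The plan is to prove by induction on the algorithm's stage index $i$ that $\mathbb{W}_i$ coincides with the partial Fiedler product $\mathbb{M}_{\tau^{-1}(1)}\cdots\mathbb{M}_{\tau^{-1}(i+2)}$, where $\tau$ is the bijection on $\{0,1,\ldots,i+1\}$ induced by $\sigma$; setting $i=d_D-2$ then yields $\mathbb{W}_{d_D-2}=\mathbb{M}_\sigma$, which is exactly the output claim. The overall template is identical to that of Theorem~\ref{cowi} (the case $d_A>d_D$) and to Theorem~3.11 in \cite{rafinami}: the algorithm inserts the next Fiedler factor $\mathbb{M}_{i+1}$ either to the left of the current block of consecutions, if $\sigma$ has a consecution at $i$, or into the group of inversions, if $\sigma$ has an inversion at $i$. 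The commutativity relations $\mathbb{M}_i\mathbb{M}_j=\mathbb{M}_j\mathbb{M}_i$ for $|i-j|>1$ make both placements unambiguous.

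For the base case $i=0$ I would directly verify, by $2\times 2$ block multiplication using Definition~\ref{mifpq}, that $\mathbb{M}_1\mathbb{M}_0$ (respectively $\mathbb{M}_0\mathbb{M}_1$) reproduces exactly the consecution (respectively inversion) form of $\mathbb{W}_0$ displayed in the algorithm. For the inductive step in the first loop, $1\le i\le d_A-2$, one has $\mathbb{M}_{i+1}=\diag(M_{i+1},N_{i+1})$, so multiplication by $\mathbb{M}_{i+1}$ acts simultaneously in the top $n$-block portion, via $M_{i+1}$, and in the bottom $m$-block portion, via $N_{i+1}$. The row/column replacements prescribed by the algorithm then correspond exactly to the action of $M_{i+1}$ on the rows/columns indexed $i+1,i+2$ of the $A$-block and of $N_{i+1}$ on the analogous positions of the $D$-block; the coupling entries $(e_{d_A}e_{d_D}^T)\otimes B$ and $-(e_{d_D}e_{d_A}^T)\otimes C$ are affected only when rows/columns indexed $d_A$ or $d_D$ are touched, which is correctly tracked by the sub-block extractions $W_{i2}$ and $W_{i4}$.

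The genuinely new point, compared with Theorem~\ref{cowi}, is the second loop $d_A-1\le i\le d_D-2$. In this range Definition~\ref{mifpq} gives $\mathbb{M}_{i+1}=\diag(I_{d_An},N_{i+1})$, so left- or right-multiplication by $\mathbb{M}_{i+1}$ leaves the entire $A$-block portion of $\mathbb{W}_{i-1}$ untouched and only updates the $D$-block portion. This is precisely what the second \textbf{for} loop does, by carrying over the slice $\mathbb{W}_{i-1}(1:i+1,1:i+1)$ verbatim and inserting $-D_{i+1}$ and $I_m$ into the appropriate rows of the bottom block. I would therefore treat the two loops separately, but each iteration reduces to a direct block computation.

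The main obstacle will be bookkeeping: the algorithm addresses sub-blocks by numerical index ranges such as $\mathbb{W}_{i-1}(i+2:2i+2,1:i+1)$, and one must verify that these ranges consistently identify the $A$-part and the $D$-part of $\mathbb{W}_{i-1}$ across both loops and across the two consecution/inversion branches. In particular, at the transition $i=d_A-1$ one must check that the format of $\mathbb{W}_{d_A-2}$ produced by the first loop is compatible with the indexing used by the second loop. This is routine but requires care; the verification pattern is already laid out in the proof of Theorem~3.11 of \cite{rafinami} and of Theorem~\ref{cowi} above, and it adapts mutatis mutandis by swapping the roles of $A$ and $D$.
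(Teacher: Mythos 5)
Your strategy coincides with the paper's: the paper's entire proof of this theorem is the assertion that the result follows by induction on $d_D=\max(d_A,d_D)$, with the details exactly as in the proof of Theorem~3.11 of \cite{rafinami}, and your induction on the stage index --- with the invariant that $\mathbb{W}_i$ is the nontrivial trailing part of the product of $\mathbb{M}_0,\ldots,\mathbb{M}_{i+1}$ taken in the relative order determined by $\sigma$, and with the observation that in the second loop $\mathbb{M}_{i+1}=\diag(I_{d_An},N_{i+1})$ so that only the $D$-part is updated --- is precisely that argument written out. The splitting into the two loops and the attention to the transition at $i=d_A-1$ are the right ingredients.

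There is, however, one concrete error that would make your base case fail as written: you have the consecution/inversion orientation backwards. In the paper's convention (Definition~\ref{dci}), $\sigma(i)$ is the position of $\mathbb{M}_i$ in the product $\mathbb{M}_\sigma$, so a consecution at $0$, i.e.\ $\sigma(0)<\sigma(1)$, places $\mathbb{M}_0$ to the \emph{left} of $\mathbb{M}_1$; hence the consecution branch of the algorithm must equal the trailing part of $\mathbb{M}_0\mathbb{M}_1$, not of $\mathbb{M}_1\mathbb{M}_0$. Indeed, restricting to the $A$-blocks (where $\mathbb{M}_0,\mathbb{M}_1$ act as $M_0,M_1$), one gets
$\left[\begin{smallmatrix} I_n & 0\\ 0 & -A_0\end{smallmatrix}\right]\left[\begin{smallmatrix} -A_1 & I_n\\ I_n & 0\end{smallmatrix}\right]=\left[\begin{smallmatrix} -A_1 & I_n\\ -A_0 & 0\end{smallmatrix}\right]$,
which is the consecution form of $\mathbb{W}_0$ (coefficients running down the first block column), whereas the reversed product
$\left[\begin{smallmatrix} -A_1 & I_n\\ I_n & 0\end{smallmatrix}\right]\left[\begin{smallmatrix} I_n & 0\\ 0 & -A_0\end{smallmatrix}\right]=\left[\begin{smallmatrix} -A_1 & -A_0\\ I_n & 0\end{smallmatrix}\right]$
is the inversion form (coefficients along the first block row). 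The same swap infects your inductive step: a consecution at $i$ means $\mathbb{M}_{i+1}$ is appended on the \emph{right}, $\mathbb{W}_i=\mathbb{W}_{i-1}\mathbb{M}_{i+1}$ (up to the identity embedding), while an inversion means it is prepended on the left, $\mathbb{W}_i=\mathbb{M}_{i+1}\mathbb{W}_{i-1}$ --- not ``to the left \ldots{} if $\sigma$ has a consecution at $i$'' as you wrote. This orientation is also forced by consistency with the companion forms: all inversions give the first companion form (\ref{C1}), $\mathbb{M}_\sigma=\mathbb{M}_{d-1}\cdots\mathbb{M}_1\mathbb{M}_0$, whose coefficients lie along the first block row, whereas all consecutions give the second companion form (\ref{C2}), $\mathbb{M}_\sigma=\mathbb{M}_0\mathbb{M}_1\cdots\mathbb{M}_{d-1}$, whose coefficients run down the first block column. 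Once you interchange the two cases throughout (in the base case and in the insertion rule), the remainder of your proposal goes through.
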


\begin{proof}
We prove the result by induction on the degree $d_D= \max(d_A, d_D)$.  Then the rest of the proof follows directly from proof of Theorem~3.11 in \cite{rafinami}.
\end{proof}

\section{Fiedler linearizations of Rosenbrock system matrix}\label{sec:system}
In this section we show that the constructed Fiedler pencils associated with Rosenbrock systems are indeed linearizations.
To do this we have to recall a few basic facts.
\begin{definition}[System equivalence] Let $ S_1(\lam)$ and $S_2(\lam)$ be $(n+m)\times (n+m)$ Rosenbrock system polynomials of the form (\ref{rosmatrix}), partitioned conformably.  Then $ S_1(\lambda)$ is said to be \emph{system equivalent} to  $S_2(\lambda)$ (denoted as $ S_1(\lam) \thicksim_{se} S_2(\lam)$), if there exist unimodular matrix polynomials $ U(\lambda), V(\lambda)\in \mathbb F^{n,n}$, $ \widetilde{U}(\lam)\in \mathbb F^{m,m}$, and $\widetilde{V}(\lam)\in \mathbb F^{m,m}$ such that  for all   $\lam \in \C$ we have
\begin{equation}\label{sysequi}
 \left[\begin{array}{c|c} U(\lam) &  0 \\  \hline 0 & \widetilde{U}(\lam) \end{array} \right] S_1(\lam)  \left[ \begin{array}{c|c} V(\lam) & 0 \\ \hline  0&  \widetilde{V}(\lam) \end{array}\right] = S_2 (\lam).
\end{equation}
\end{definition}
\begin{definition}[Rosenbrock linearization] Let $ S(\lam)$ be an $ (n+m)\times (n+m)$ system polynomial of the form (\ref{rosmatrix})
with degree $d=\max\{d_A,d_D\}$. A linear matrix polynomial ${L}(\lambda)$ is called a \emph{Rosenbrock linearization} of $S(\lambda)$,
if it has the form
\[
{L}(\lambda):=
\left[
                                                     \begin{array}{c|c}
                                                       {\mathcal A}(\lam) & {\mathcal B}\\
                                                       \hline
                                                       {\mathcal C} & {\mathcal D}(\lam) \\
                                                     \end{array}
                                                   \right],
\]
with matrix polynomials ${\mathcal A}(\lambda),{\mathcal D}(\lambda)$ of degree less than or equal to $1$, (constant in $\lambda$) matrices ${\mathcal B}, {\mathcal C}$,  and ${\mathcal S}(\lambda)$ is system equivalent to
\begin{equation}\label{sysL}
\tilde {\mathcal S}(\lambda):=
 \left[\begin{array}{c|c} U(\lam) &  0 \\  \hline 0 & \widetilde{U}(\lam) \end{array} \right] L(\lam)  \left[ \begin{array}{c|c} V(\lam) & 0 \\ \hline  0&  \widetilde{V}(\lam) \end{array}\right] =\left[
                                           \begin{array}{c|c}
                                             I_{(d-1)n} & 0 \\
                                             \hline
                                             0 &  S(\lam) \\
                                           \end{array}
                                         \right].
\end{equation}
If, in addition, $U(\lam)$ and $V(\lam)$ in  (\ref{sysL}) are constant matrices, then ${\mathcal S}(\lam)$ is said to be a \emph{strict Rosenbrock linearization} of $S(\lam)$.
\end{definition}

Let $E := (E_{ij})$ be a block $m \times n$ matrix with $p\times q$  blocks $E_{ij}$. The {\em block transpose} of $E$, see \cite{TDM}, denoted by $E^{\mathcal{B}},$ is the block $n \times m$ matrix  with $p \times q$ blocks defined by $(E^{\mathcal{B}})_{ij} := E_{ji}$.
%
We slightly modify this definition for the special structure of Rosenbrock linearizations.

\begin{definition}[Rosenbrock block transpose]\label{btfrmf} Consider a Rosenbrock system matrix of the from (\ref{rosmatrix}) and let $\mathcal S $ be an $(d_An+m)\times (d_An+m)$ Rosenbrock linearization of the form (\ref{sysL}), where ${\mathcal B}:=-(e_i e_j^{T}) \otimes B$ and
${\mathcal C}:=(e_k e_{\ell}^{T}) \otimes C$ with $B\in \mathbb F^{n,m}$, $C\in \mathbb F^{p,n}$, where
${\mathcal A} := [{\mathcal A}_{ij}]$ is an $d_A \times d_A$ block matrix with ${\mathcal A}_{ij} \in {\mathbb F}^{n \times n}$,
and where ${\mathcal D}$ is a $d_D \times d_D$ block matrix with $D_{ij} \in {\mathbb F}^{p \times m}$, or ${\mathbb F}^{m \times m}$ or ${\mathbb F}^{p,p}$. The {\em Rosenbrock block transpose} of $\mathcal{S}$, denoted by $\mathcal{S}^{\mathbb{B}}$ is defined by
\[
\mathcal{S}^{\mathbb{B}} := \left[
                 \begin{array}{c|c}
                   A(\lambda)^{\mathcal{B}} & -(e_{\ell} e_k^{T}) \otimes B \\
                   \hline
                   (e_j e_i^{T})  \otimes C & D(\lambda)^{\mathcal{B}} \\
                 \end{array}
               \right].
               \]
\end{definition}
For $\mathcal{C}_1(\lam)$ and $\mathcal{C}_2(\lam)$ given in (\ref{C1}) and (\ref{C2}), respectively, we have $\mathcal{C}_2(\lam) = \mathcal{C}_1(\lam)^{\mathbb{B}}$.

%
%
Extending [\cite{TDM}, Definition~4.2] we define auxiliary matrix polynomials associated with Horner shifts for system polynomials.
%

%
%

%
\begin{definition} \label{qrtdmfmp}
Let $A(\lambda) = \sum\limits_{i = 0}^{d_A}\lambda^{i} A_{i}\in {\mathbb C}[\lambda]^{n,n}$ be of degree $d_A$ and let $ P_{i}(\lambda)$ be the degree $i$ Horner shift of $A(\lambda)$. For $1 \leq i \leq d_A-1$, define the matrix polynomials
$$
 Q_{i}(\lambda) := \left[
                    \begin{array}{cccc}
                      I_{(i-1)n} &  &  &  \\
                       & I_{n} & \lambda I_{n} &  \\
                       & 0_{n} & I_{n} &  \\
                       &  &  & I_{(d_A-i-1)n} \\
                    \end{array}
                  \right], $$ $$ R_{i}(\lambda) := \left[
      \begin{array}{cccc}
        I_{(i-1)n} &  &  &  \\
          & 0_{n} & I_{n} &  \\
         & -I_{n} & P_{i}(\lambda) &  \\
          &  &  & I_{(d_A-i-1)n} \\
      \end{array}
    \right],
$$
$$ T_{i}(\lam) := \left[
      \begin{array}{cccc}
        0_{(i-1)n} &  &  &  \\
          & 0_{n} & \lam P_{i-1}(\lam) &  \\
         & \lam I_{n} & \lam^{2}P_{i-1}(\lam) &  \\
          &  &  & 0_{(d_A-i-1)n} \\
      \end{array}
    \right], $$
$$ D_{i}(\lambda) := \left[
      \begin{array}{cccc}
        0_{(i-1)n} &  &  &  \\
          & P_{i-1}(\lam) & 0_{n}  &  \\
         & 0_{n} & I_{n} &  \\
          &  &  & I_{(d_A-i-1)n} \\
      \end{array}
    \right], $$
and  $D_{d_A}(\lam) := \diag \left[0_{(d_A-1)n},  P_{d_A-1}(\lam)\right].
$
For simplicity, we often write $Q_{i}, R_{i}, T_{i}, D_{i}$ in place of $Q_{i}(\lam), R_{i}(\lam), T_{i}(\lam), D_{i}(\lam)$. Note that $D_{1}(\lambda) = M_{d_A}$, and $Q_{i}(\lam)$, $R_{i}(\lam)$ are unimodular for all $i = 1, \ldots, d_A-1$. Also note that $R_{i}^{\mathcal{B}}(\lam) = R_{i}(\lambda).$
\end{definition}

The auxiliary matrices satisfy the following relations.

\begin{lemma}[\cite{TDM}, Lemma~4.3]\label{lfqrtd}
Let $Q_{i}, R_{i}, T_{i}, D_{i}$ be as in Definition \ref{qrtdmfmp} and $M_i$'s be Fiedler matrices associated with $A(\lam).$ Then the following relations hold for $i = 1, \ldots, d_{A}-1$.
\begin{itemize}

\item[(a)] $Q_{i}^{\mathcal{B}}(\lambda D_{i})R_{i} = \lambda D_{i+1} + T_{i}$, and $Q_{i}^{\mathcal{B}}(M_{d_{A}-(i+1)}M_{d_{A}-i})R_{i} = M_{d_{A}-(i+1)} + T_{i}$.

\item[(b)] $R_{i}^{\mathcal{B}}(\lambda D_{i})Q_{i} = \lambda D_{i+1} + T_{i}^{\mathcal{B}}$, and $R_{i}^{\mathcal{B}}(M_{d_{A}-i}M_{d_{A}-(i+1)})Q_{i} = M_{d_{A}-(i+1)} + T_{i}^{\mathcal{B}}$.

\item[(c)] $T_{i}M_{j} = M_{j}T_{i} = T_{i}$ and $T_{i}^{\mathcal{B}}M_{j} = M_{j}T_{i}^{\mathcal{B}} = T_{i}^{\mathcal{B}}$ for all $j \leq d_{A}-i-2$.
\end{itemize}
\end{lemma}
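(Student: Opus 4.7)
My plan is to treat all three parts as direct block-matrix computations, exploiting the fact that every matrix appearing in the statement is the identity outside a $2\times 2$ or $3\times 3$ block window centered at block positions $i, i+1$. Outside that window, the identities are automatic from the identity blocks; inside it, each identity reduces to a short calculation that is made to work by the Horner recurrence $P_{k+1}(\lam)=\lam P_{k}(\lam)+A_{d_A-k-1}$.

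For the first identity in (a), I would substitute the block forms of $Q_i^{\mathcal B}$, $\lam D_i$, and $R_i$ from Definition~\ref{qrtdmfmp} and multiply. Everything outside block positions $\{i,i+1\}$ is untouched because those rows and columns of $D_i$ are either $I$ or $0$ and the corresponding rows/columns of $Q_i^{\mathcal B}$ and $R_i$ are identities. Inside the window the computation becomes a $2\times 2$ block product in the entries $P_{i-1}(\lam)$, $I_n$, $\lam I_n$, which after expansion gives exactly the active window of $\lam D_{i+1}+T_i$; matching the $(i+1,i+1)$ block requires the Horner identity $P_i(\lam)=\lam P_{i-1}(\lam)+A_{d_A-i}$. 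For the second identity in (a), I would first compute $M_{d_A-(i+1)}M_{d_A-i}$ in its block form (again only a $3\times 3$ window at positions $\{i,i+1,i+2\}$ is nontrivial, with the $-A_{d_A-i-1}$ and $I_n$ entries visible), then conjugate by $Q_i^{\mathcal B}$ and $R_i$ and verify that the resulting active window matches $M_{d_A-(i+1)}+T_i$, again using Horner.

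Part (b) I would obtain from (a) by taking block transposes. The definition of $R_i$ was explicitly noted to satisfy $R_i^{\mathcal B}=R_i$, and $D_i$, $\lam D_i$, $M_{d_A-(i+1)}$ are all block-symmetric in the sense that $\mathcal B$ fixes their off-diagonal $0$/$I$ entries and only transposes the $P_{i-1}(\lam)$ or $-A_{d_A-i-1}$ block (which is a single block and therefore also fixed as a $1\times1$ block). Thus applying $\mathcal B$ to each identity of (a) produces (b), with $T_i$ replaced by $T_i^{\mathcal B}$ and the order of the two Fiedler factors reversed in accord with the statement.

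Part (c) is the cleanest: by construction $T_i$ and $T_i^{\mathcal B}$ have support only in block rows/columns $i$ and $i+1$, while a Fiedler matrix $M_j$ acts nontrivially only on block rows/columns $d_A-j-1$ and $d_A-j$. The hypothesis $j\le d_A-i-2$ gives $d_A-j\ge i+2$, so the two active windows are disjoint, and $M_j$ reduces to the identity on the support of $T_i$; left or right multiplication therefore leaves $T_i$ and $T_i^{\mathcal B}$ unchanged. The only real obstacle throughout is keeping the indexing conventions (Horner shift index versus block position) aligned; once the windows are drawn correctly, each identity is a one-window verification, and this is how I would present it.
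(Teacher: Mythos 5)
The paper never proves this lemma at all: it is imported verbatim from \cite{TDM} (Lemma~4.3) with only a citation, so your argument has to stand on its own. Your overall plan---window-by-window block computation plus the Horner recurrence for (a), disjoint supports for (c)---is indeed the standard verification. But executed literally it does not close, for a reason you should have caught: with Definition~\ref{qrtdmfmp} exactly as printed, where $R_i$ carries $-I_n$ in its $(i+1,i)$ window slot, the product $Q_i^{\mathcal{B}}(\lambda D_i)R_i$ has $-\lambda I_n$ in its $(i+1,i)$ block while $\lambda D_{i+1}+T_i$ has $+\lambda I_n$, and the second identity of (a) likewise picks up spurious $-I_n$ blocks. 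The identities hold only with $+I_n$ there, as in \cite{TDM}; that is also the only choice compatible with the remark $R_i^{\mathcal{B}}(\lambda)=R_i(\lambda)$ which you yourself quote, since block transposition swaps the two off-diagonal window blocks. So the claim that the expansion ``gives exactly the active window of $\lambda D_{i+1}+T_i$'' is false under the stated definitions; a careful run of your own plan would have surfaced the sign. (Minor: the Horner identity is actually needed in the second identity of (a), where $P_i-A_{d_A-i}=\lambda P_{i-1}$ appears, not to match the $(i+1,i+1)$ block of the first.)

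The genuine logical gap is in (b). You deduce it from (a) by ``taking block transposes,'' but block transposition moves blocks without transposing them and is therefore not an anti-automorphism: $((EF)^{\mathcal{B}})_{ij}=\sum_k E_{jk}F_{ki}$ whereas $(F^{\mathcal{B}}E^{\mathcal{B}})_{ij}=\sum_k F_{ki}E_{jk}$, and these differ whenever the blocks fail to commute (take $E=\diag(A,0)$, $F=\diag(B,0)$ with $AB\neq BA$). Block symmetry of the individual factors, which is all you invoke, does not give the reversal $(Q_i^{\mathcal{B}}XR_i)^{\mathcal{B}}=R_i^{\mathcal{B}}X^{\mathcal{B}}Q_i$. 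The step can be repaired here because in every product that actually occurs each non-scalar block ($P_{i-1}$, $P_i$, $A_j$) multiplies only blocks equal to $0$, $I_n$, or $\lambda I_n$---but verifying that is the same amount of work as proving (b) by the direct two-line window computation, which is the safe route. Finally, in (c) you place the active window of $M_j$ at block rows/columns $d_A-j-1,\ d_A-j$; counting the leading identity blocks in (\ref{imfp}) it sits at $d_A-j,\ d_A-j+1$ (this is also the indexing you implicitly and correctly used in (a)). Under your stated indexing the windows of $M_j$ and $T_i$ would overlap in block $i+1$ in the boundary case $j=d_A-i-2$ and the disjointness argument would collapse there; the inequality $d_A-j\geq i+2$ you derive is the right one, but only for the correctly placed window.
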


\begin{definition} \label{qrtdmfmq}
Let $D(\lambda) = \sum\limits_{i = 0}^{d_D}\lambda^{i} D_{i}$ be an $m \times m$ matrix polynomial, and let $P_{i}(\lambda)$ be the degree $i$ Horner shift of $D(\lambda)$. For $1 \leq i \leq d_D-1$, define the following $m d_D \times m d_D$ matrix polynomials:
{\small $$
 Z_{i}(\lambda) := \left[
                    \begin{array}{cccc}
                      I_{(i-1)m} &  &  &  \\
                       & I_{m} & \lambda I_{m} &  \\
                       & 0_{m} & I_{m} &  \\
                       &  &  & I_{(d_D-i-1)m} \\
                    \end{array}
                  \right], $$ $$ J_{i}(\lambda) := \left[
      \begin{array}{cccc}
        I_{(i-1)m} &  &  &  \\
          & 0_{m} & I_{m} &  \\
         & I_{m} & P_{i}(\lambda) &  \\
          &  &  & I_{(d_D-i-1)m} \\
      \end{array}
    \right], $$

$$ H_{i}(\lam) := \left[
      \begin{array}{cccc}
        0_{(i-1)m} &  &  &  \\
          & 0_{m} & \lam P_{i-1}(\lam) &  \\
         & \lam I_{m} & \lam^{2}P_{i-1}(\lam) &  \\
          &  &  & 0_{(d_D-i-1)m} \\
      \end{array}
    \right], $$
$$ E_{i}(\lambda) := \left[
      \begin{array}{cccc}
        0_{(i-1)m} &  &  &  \\
          & P_{i-1}(\lam) & 0_{m}  &  \\
         & 0_{m} & I_{m} &  \\
          &  &  & I_{(d_D-i-1)m} \\
      \end{array}
    \right], $$}
and  $E_{d_D}(\lam) := \diag \left[0_{(d_D-1)m},  P_{d_D-1}(\lam)\right].
$
For simplicity, we often write $Z_{i}, J_{i}, H_{i}, E_{i}$ in place of $Z_{i}(\lam), J_{i}(\lam), H_{i}(\lam), E_{i}(\lam)$. Note that $E_{1}(\lambda) = M_{d_D}$, and $Z_{i}(\lam)$, $J_{i}(\lam)$ are unimodular for all $i = 1, \ldots, d_D-1$. Also note that $J_{i}^{\mathcal{B}}(\lam) = J_{i}(\lambda).$
\end{definition}

\begin{remark}\label{amfD}
Consider the auxiliary matrices $Z_{i}(\lambda), J_{i}(\lambda), H_{i}(\lambda)$, and $E_{i}(\lambda)$ given in Definition \ref{qrtdmfmq}. Then the Lemma \ref{lfqrtd} also holds for $Z_{i}(\lambda), J_{i}(\lambda), H_{i}(\lambda)$, and $E_{i}(\lambda)$.
\end{remark}

\begin{definition}[Auxiliary system polynomials] \label{amr}
Let $Q_{i}(\lambda), R_{i}(\lambda), T_{i}(\lambda)$, and $D_{i}(\lambda)$ be as in Definition \ref{qrtdmfmp}. Let $Z_{i}(\lambda), J_{i}(\lambda), H_{i}(\lambda)$, and $E_{i}(\lambda)$ be as in Definition \ref{qrtdmfmq}. Let $ d = \max\{d_A, d_D\},$ and $r= \min\{d_A, d_D\}$.
For $i = 1, \ldots, d-1$, define $(n d_A+m d_D) \times (n d_A+m d_D)$ system polynomials:
		
			\[\mathcal{Q}_i(\lambda)= \begin{cases} \left[\begin{array}{c|c}Q_i(\lambda)&0\\ \hline 0&Z_i(\lambda)\end{array}\right] ,& \text{for}\ 1\leq i \leq r-1 \\
			\left[\begin{array}{c|c}Q_i(\lambda)&0\\ \hline 0&I_{d_Dm}\end{array} \right],& \text{for}\ r\leq i \leq d-1 \ \text{and}\ d_A>d_D \\
			\left[\begin{array}{c|c}I_{d_An}&0\\ \hline 0&Z_i(\lambda)\end{array}\right], &\text{for}\ r\leq i \leq d-1 \ \text{and}\ d_A\ <d_D \end{cases} \]
		
		\[\mathcal{R}_i(\lambda)= \begin{cases} \left[\begin{array}{c|c}R_i(\lambda)&0\\ \hline 0&J_i(\lambda)\end{array}\right] ,& \text{for}\ 1\leq i \leq r-1 \\
			\left[\begin{array}{c|c}R_i(\lambda)&0\\ \hline 0&I_{d_Dm}\end{array} \right],& \text{for}\ r\leq i \leq d-1 \ \text{and}\ d_A>d_D \\
			\left[\begin{array}{c|c}I_{d_An}&0\\ \hline 0&J_i(\lambda)\end{array}\right], &\text{for}\ r\leq i \leq d-1 \ \text{and}\ d_A\ <d_D \end{cases} \]
		
		\[\mathcal{T}_i(\lambda)= \begin{cases} \left[\begin{array}{c|c}T_i(\lambda)&0\\ \hline 0&H_i(\lambda)\end{array}\right] ,& \text{for}\ 1\leq i \leq r-1 \\
			\left[\begin{array}{c|c}T_i(\lambda)&0\\ \hline 0&I_{d_Dm}\end{array} \right],& \text{for}\ r\leq i \leq d-1 \ \text{and}\ d_A>d_D \\
			\left[\begin{array}{c|c}I_{d_An}&0\\ \hline 0&H_i(\lambda)\end{array}\right], &\text{for}\ r\leq i \leq d-1 \ \text{and}\ d_A\ <d_D \end{cases} \]
	
	\[\mathcal{D}_i(\lambda)= \begin{cases} \left[\begin{array}{c|c}D_i(\lambda)&0\\ \hline 0&E_i(\lambda)\end{array}\right] ,& \text{for}\ 1\leq i \leq r-1 \\
		\left[\begin{array}{c|c}D_i(\lambda)&0\\ \hline 0&I_{d_Dm}\end{array} \right],& \text{for}\ \leq i \leq d-1 \ \text{and}\ d_A>d_D \\
		\left[\begin{array}{c|c}I_{d_An}&0\\ \hline 0&E_i(\lambda)\end{array}\right], &\text{for}\ r\leq i \leq d-1 \ \text{and}\ d_A\ <d_D \end{cases} \]
		\begin{eqnarray*}
\mbox{ and }\,\, \mathcal{D}_{d}(\lambda) & :=& \left[
                                  \begin{array}{c|c}
                                    D_{d_A}(\lam) & 0 \\
                                    \hline
                                    0 & E_{d_D}(\lam) \\
                                  \end{array}
                                \right], \text{ where } d = \max\{d_A, d_D\}.
\end{eqnarray*}
Note that $\mathcal{D}_{1}(\lambda) = \left[
                                          \begin{array}{c|c}
                                            D_{1}(\lambda) & 0 \\
                                            \hline
                                            0 & E_{1}(\lam) \\
                                          \end{array}
                                        \right] = \left[
                                                    \begin{array}{c|c}
                                                      M_{d_A} & 0 \\
                                                    \hline
                                                      0 & N_{d_D} \\
                                                    \end{array}
                                                  \right] = \mathbb{M}_{d}$ and that $\mathcal{Q}_{i}(\lambda)$ and $\mathcal{R}_{i}(\lambda)$ are unimodular matrix polynomials for $i = 1, \ldots, d-1$. Also, note that $\mathcal{R}_{i}^{\mathbb{B}}(\lambda) = \mathcal{R}_{i}(\lambda)$ for $i = 1, \ldots, d-1$.

\end{definition}

The auxiliary system polynomials satisfy the following relations.

\begin{lemma}
Let $\mathcal{Q}_{i}, \mathcal{R}_{i}, \mathcal{T}_{i}, \mathcal{D}_{i}$ be the system polynomials given in Definition \ref{amr} and $\mathbb{M}_i$'s be Fiedler matrices associated with $\mathcal{S}(\lam).$ Then the following system equivalence relations hold for $i = 1, \ldots, d-1$.
\begin{itemize}

\item[(a)] $\mathcal{Q}_{i}^{\mathbb{B}}(\lambda \mathcal{D}_{i})\mathcal{R}_{i} = \lambda \mathcal{D}_{i+1} + \mathcal{T}_{i}$, and $\mathcal{Q}_{i}^{\mathbb{B}}(\mathbb{M}_{d-(i+1)}\mathbb{M}_{d-i})\mathcal{R}_{i} = \mathbb{M}_{d-(i+1)} + \mathcal{T}_{i}$.

\item[(b)] $\mathcal{R}_{i}^{\mathbb{B}}(\lambda \mathcal{D}_{i})\mathcal{Q}_{i} = \lambda \mathcal{D}_{i+1} + \mathcal{T}_{i}^{\mathbb{B}}$, and $\mathcal{R}_{i}^{\mathbb{B}}(\mathbb{M}_{d-i}\mathbb{M}_{d-(i+1)})\mathcal{Q}_{i} = \mathbb{M}_{d-(i+1)} + \mathcal{T}_{i}^{\mathbb{B}}$.

\item[(c)] $\mathcal{T}_{i}\mathbb{M}_{j} = \mathbb{M}_{j}\mathcal{T}_{i} = \mathcal{T}_{i}$ and $\mathcal{T}_{i}^{\mathbb{B}}\mathbb{M}_{j} = \mathbb{M}_{j}\mathcal{T}_{i}^{\mathbb{B}} = \mathcal{T}_{i}^{\mathbb{B}}$ for all $j \leq d-i-2$.
\end{itemize}
\end{lemma}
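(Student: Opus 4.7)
The plan is to reduce each identity to the corresponding matrix--polynomial identities (Lemma~\ref{lfqrtd} for $A(\lam)$ and its analogue from Remark~\ref{amfD} for $D(\lam)$) by exploiting the block--diagonal structure of both the auxiliary system polynomials $\mathcal{Q}_i,\mathcal{R}_i,\mathcal{T}_i,\mathcal{D}_i$ and the Fiedler matrices $\mathbb{M}_j$ for indices $j\ge 1$. The key observation is that for every $j\ge 1$ the Fiedler matrix $\mathbb{M}_j$ in Definition~\ref{mifpq} has the form $\mathrm{diag}(M_j,N_j)$, $\mathrm{diag}(M_j,I_{d_Dm})$, or $\mathrm{diag}(I_{d_An},N_j)$, and the off-diagonal coupling of $\mathcal{S}(\lam)$ is carried only by $\mathbb{M}_0$, which does not appear in any of the products on the right-hand side of (a), (b), or (c). Consequently all products $\mathcal{Q}_i^{\mathbb{B}}(\lam\mathcal{D}_i)\mathcal{R}_i$, $\mathcal{R}_i^{\mathbb{B}}(\lam\mathcal{D}_i)\mathcal{Q}_i$, $\mathbb{M}_{d-(i+1)}\mathbb{M}_{d-i}$ and $\mathcal{T}_i\mathbb{M}_j$ decouple into an $A$--block and a $D$--block, and each block becomes an identity that has already been proved in Lemma~\ref{lfqrtd}.

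I would proceed by fixing $i\in\{1,\ldots,d-1\}$ and splitting into three cases according to Definition~\ref{amr}. In the case $1\le i\le r-1$, the block-diagonal structure gives
\[
\mathcal{Q}_i^{\mathbb{B}}(\lam\mathcal{D}_i)\mathcal{R}_i
=\mathrm{diag}\!\bigl(Q_i^{\mathcal{B}}(\lam D_i)R_i,\;Z_i^{\mathcal{B}}(\lam E_i)J_i\bigr),
\]
and Lemma~\ref{lfqrtd}(a) together with its $D$--analogue from Remark~\ref{amfD} identify the blocks as $\lam D_{i+1}+T_i$ and $\lam E_{i+1}+H_i$, which sum to $\lam\mathcal{D}_{i+1}+\mathcal{T}_i$ by the definition of the latter. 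In the case $r\le i\le d-1$ with $d_A>d_D$, the $D$--side of $\mathcal{Q}_i,\mathcal{R}_i,\mathcal{T}_i,\mathcal{D}_i$ collapses to $I_{d_Dm}$ (resp.\ $0$), so the only nontrivial identity is the $A$--block identity, which is again Lemma~\ref{lfqrtd}(a); the case $d_A<d_D$ is symmetric with the $A$--block degenerate. The same case split and the same appeal to Lemma~\ref{lfqrtd}(a)--(b) and Remark~\ref{amfD} handle the second identities in (a) and (b), once one verifies that for the relevant shifted index $d-(i+1)$ the Fiedler matrix $\mathbb{M}_{d-(i+1)}$ indeed decomposes along the same block pattern as $\mathcal{D}_{i+1}$ and $\mathcal{T}_i$. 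Part (c) follows immediately from Lemma~\ref{lfqrtd}(c) applied block-wise, since $\mathbb{M}_j$ for $j\le d-i-2$ again has no off-diagonal coupling.

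The main obstacle, and essentially the only place where work is required, is the index bookkeeping that matches the three cases of Definition~\ref{amr} to the three cases of Definition~\ref{mifpq}. Concretely, one must check that whenever $\mathcal{Q}_i$ has its $D$--block replaced by $I_{d_Dm}$ (because $i\ge r$ with $d_A>d_D$), the Fiedler matrices $\mathbb{M}_{d-i}$ and $\mathbb{M}_{d-(i+1)}$ also have their $D$--block equal to $I_{d_Dm}$, so that the decoupling truly reduces the identity to a statement about the $A$--side. With $d=d_A$ in this case, the shifted indices satisfy $d-i,d-(i+1)\le d_A-1$ but one needs them also to lie in the range $\{r,\ldots,d_A-1\}$, which is where the constraint $i\ge r$ translates; a direct verification of the inequalities confirms the compatibility. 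The mirror check is required for the case $d_A<d_D$. Once these index ranges are aligned, the remainder of the proof is a purely mechanical block-diagonal computation using Lemma~\ref{lfqrtd} and Remark~\ref{amfD}.
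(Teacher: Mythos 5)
Your strategy coincides with the paper's own proof: write $\mathcal{Q}_i,\mathcal{R}_i,\mathcal{T}_i,\mathcal{D}_i$ and the Fiedler factors as $2\times 2$ block matrices, multiply blockwise, and invoke Lemma~\ref{lfqrtd} on the $A$-blocks and Remark~\ref{amfD} on the $D$-blocks; your explicit treatment of the three regimes of Definition~\ref{amr} is in fact more careful than the paper, whose proof only writes out the regime $1\le i\le r-1$. However, the premise on which your whole reduction rests --- that $\mathbb{M}_0$ ``does not appear in any of the products on the right-hand side of (a), (b), or (c)'' --- is false, and this is a genuine gap. In the second identities of (a) and (b) the factor $\mathbb{M}_{d-(i+1)}$ \emph{is} $\mathbb{M}_0$ when $i=d-1$, and in (c) the value $j=0$ is admissible whenever $i\le d-2$, since the hypothesis is only $j\le d-i-2$. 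These cases are not peripheral: Lemma~\ref{lietli+1} uses (a)/(b) for every $i$ up to $d-1$, and uses (c) to absorb $\mathcal{T}_i$ into products of Fiedler factors which, by Definition~\ref{syspendef}, always contain the factor $\mathbb{M}_0$. Where $\mathbb{M}_0$ enters, the product does not decouple; for instance
\[
\mathcal{T}_i\,\mathbb{M}_0=\left[\begin{array}{c|c}
T_iM_0 & T_i\bigl[(e_{d_A}e_{d_D}^{T})\otimes B\bigr] \\
\hline
-H_i\bigl[(e_{d_D}e_{d_A}^{T})\otimes C\bigr] & H_iN_0
\end{array}\right],
\]
so on top of Lemma~\ref{lfqrtd}(c) one must prove that both off-diagonal blocks vanish, and in (a) with $i=d-1$ one must prove that the coupling blocks are reproduced, e.g.\ that $Q_{d-1}^{\mathcal{B}}\bigl[(e_{d_A}e_{d_D}^{T})\otimes B\bigr]N_1J_{d-1}=(e_{d_A}e_{d_D}^{T})\otimes B$. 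None of these verifications appears in your proposal, yet they are precisely the content of the lemma that is not already contained in its matrix-polynomial prototype.

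Moreover, these missing checks are not ``purely mechanical.'' Since $T_i$ and $H_i$ have nonzero block columns only in positions $i$ and $i+1$, one gets $T_i\bigl[(e_{d_A}e_{d_D}^{T})\otimes B\bigr]=0$ only for $i\le d_A-2$ and $H_i\bigl[(e_{d_D}e_{d_A}^{T})\otimes C\bigr]=0$ only for $i\le d_D-2$, whereas (c) only restricts $i\le d-2$ with $d=\max\{d_A,d_D\}$. Hence for unequal degrees (e.g.\ $d_A>d_D\ge 2$, $i=d_D-1$, $j=0$) the required vanishing genuinely fails, and with it part (c) as stated: the index bookkeeping you defer to ``a direct verification of the inequalities'' does not confirm compatibility but uncovers a misalignment in Definition~\ref{amr}. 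In fairness, the paper's own proof shares this blind spot --- it writes $\mathbb{M}_{d-(i+1)}$ and $\mathbb{M}_j$ as $\mathrm{diag}(M_k,N_k)$ even when the index is $0$ --- so your proposal faithfully reproduces the published computation together with its omission; but as a self-contained argument it has a hole exactly at the interaction of the auxiliary matrices with the coupling blocks $(e_{d_A}e_{d_D}^{T})\otimes B$ and $(e_{d_D}e_{d_A}^{T})\otimes C$.
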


\begin{proof}
\begin{itemize}

\item[(a)] We have
	\begin{align*}
		&\mathcal{Q}^{\mathbb{B}}_i (\lambda \mathcal{D}_i)\mathcal{R}_i =
		 \left[ \begin{array}{c|c}Q^{\mathcal{B}}_i&0\\ \hline 0&Z^{\mathcal{B}}_i\end{array}\right]\left[\begin{array}{c|c}\lambda D_i & \\ \hline  & \lambda E_i\end{array}\right] \left[\begin{array}{c|c}R_i&0\\ \hline 0&J_i\end{array}\right] \\
        &= \left[\begin{array}{c|c}Q^{\mathcal{B}}_i(\lambda D_i)R_i & \\ \hline  &Z^{\mathcal{B}}_i(\lambda E_i)J_i \end{array}\right]\\
        &= \left[\begin{array}{c|c}\lambda D_{i+1}+T_i& \\ \hline  & \lambda E_{i+1} + H_i\end{array}\right] \ (\text{By Lemma~\ref{lfqrtd}(a) and Remark~\ref{amfD}}) \\
        &=\left[\begin{array}{c|c}\lambda D_{i+1}& \\ \hline  & \lambda E_{i+1}\end{array}\right]+ \left[\begin{array}{c|c}T_i &  \\ \hline  &H_i\end{array}\right]
        =\lambda \mathcal{D}_{i+1} + \mathcal{T}_i \,\,\,\text{   and   }
    \end{align*}		
	 \begin{align*}
	 	&\mathcal{Q}^{\mathbb{B}}_i \mathbb{M}_{d-(i+1)}\mathbb{M}_{d-i}\mathcal{R}_i= \\&=\left[ \begin{array}{c|c}Q^{\mathcal{B}}_i&0\\ \hline 0&Z^{\mathcal{B}}_i\end{array}\right] \left[ \begin{array}{c|c}M_{d-(i+1)}& \\ \hline  &D_{d-(i+1)} \end{array}\right] \left[\begin{array}{c|c}M_{d-i}& \\ \hline  &D_{d-i} \end{array}\right] \left[\begin{array}{c|c}R_i&0\\ \hline 0&J_i\end{array}\right] \\
	 	&=\left[\begin{array}{c|c}Q^{\mathcal{B}}_iM_{d-(i+1)}M_{d-i}R_i&  \\ \hline  & Z^{\mathcal{B}}_iD_{m-(i+1)}D_{m-1}J_i\end{array}\right]\\
 	    &= \left[\begin{array}{c|c}M_{d-(i+1)}+T_i&  \\ \hline  &D_{d-(i+1)}+H_i\end{array}\right]\ (\text{Lemma~\ref{lfqrtd}(a) and Remark~\ref{amfD}}) \\ &=\left[\begin{array}{c|c}M_{d-(i+1)}& \\ \hline  & D_{d-(i+1)}\end{array}\right]+\left[\begin{array}{c|c}T_i& \\ \hline &H_i\end{array}\right]
 	    = \mathbb{M}_{d-(i+1)}+\mathcal{T}_i.
 	\end{align*}
\item[(b)] We have
  	\begin{align*}
  	&\mathcal{R}^{\mathbb{B}}_i (\lambda \mathcal{D}_i)\mathcal{Q}_i = \left[ \begin{array}{c|c}R^{\mathcal{B}}_i&0\\ \hline 0&J^{\mathcal{B}}_i\end{array}\right]\left[\begin{array}{c|c}\lambda D_i & \\ \hline  & \lambda E_i\end{array}\right] \left[\begin{array}{c|c}Q_i&0\\ \hline 0&Z_i\end{array}\right] \\
  	&= \left[\begin{array}{c|c}R^{\mathcal{B}}_i(\lambda D_i)Q_i & \\ \hline  &J^{\mathcal{B}}_i(\lambda E_i)Z_i \end{array}\right]\\
  	&= \left[\begin{array}{c|c}\lambda D_{i+1}+T^{\mathcal{B}}_i& \\ \hline  & \lambda E_{i+1} + H^{\mathcal{B}}_i\end{array}\right] \ (\text{From Lemma~\ref{lfqrtd}(b) and Remark~\ref{amfD}}) \\
  	&=\left[\begin{array}{c|c}\lambda D_{i+1}& \\ \hline  & \lambda E_{i+1}\end{array}\right]+ \left[\begin{array}{c|c}T^{\mathcal{B}}_i &  \\ \hline  &H^{\mathcal{B}}_i\end{array}\right]
  	=\lambda \mathcal{D}_{i+1} + \mathcal{T}^{\mathbb{B}}_i	   \,\,\,\text{   and   }
  	\end{align*}	
 	\begin{align*}
	&\mathcal{R}^{\mathbb{B}}_i \mathbb{M}_{d-i}\mathbb{M}_{d-(i+1)}\mathcal{Q}_i= \\ &\left[ \begin{array}{c|c}R^{\mathcal{B}}_i&0\\ \hline 0&J^{\mathcal{B}}_i\end{array}\right] \left[\begin{array}{c|c}M_{d-i}& \\ \hline  &D_{d-i} \end{array}\right] \left[ \begin{array}{c|c}M_{d-(i+1)}& \\ \hline  &D_{d-(i+1)} \end{array}\right] \left[\begin{array}{c|c}Q_i&0\\ \hline 0&Z_i\end{array}\right] \\
	&=\left[\begin{array}{c|c}R^{\mathcal{B}}_i M_{d-i}M_{d-(i+1)}Q_i&  \\ \hline  & J^{\mathcal{B}}_iD_{d-i}D_{d-(i+1)}Z_i\end{array}\right]\\
	&= \left[\begin{array}{c|c}M_{d-(i+1)}+T^{\mathcal{B}}_i&  \\ \hline  &D_{d-(i+1)}+H^{\mathcal{B}}_i\end{array}\right]\ (\text{By Lemma~\ref{lfqrtd}(b) and Remark~\ref{amfD}}) \\ &=\left[\begin{array}{c|c}M_{d-(i+1)}& \\ \hline  & D_{d-(i+1)}\end{array}\right]+\left[\begin{array}{c|c}T^{\mathcal{B}}_i& \\ \hline &H^{\mathcal{B}}_i\end{array}\right]
	= \mathbb{M}_{d-(i+1)}+\mathcal{T}^{\mathbb{B}}_i. 	
	\end{align*}	
\item[(c)] We have
	\begin{align*}
		\mathcal{T}_i\mathbb{M}_j &= \left[\begin{array}{c|c}T_i &  \\ \hline  & H_i \end{array}\right]\left[\begin{array}{c|c}M_j&  \\ \hline  &D_j \end{array}\right]
		= \left[\begin{array}{c|c}T_iM_j&  \\ \hline  & H_iD_j \end{array}\right]\\
		&= \left[\begin{array}{c|c}M_jT_i& \\ \hline  & D_jH_i\end{array}\right] \ (\text{by Lemma~\ref{lfqrtd}(c) and Remark~\ref{amfD}})\\
		&=\left[\begin{array}{c|c}M_j&  \\ \hline  &D_j \end{array}\right]\left[\begin{array}{c|c}T_i &  \\ \hline  & H_i \end{array}\right]
		=\mathbb{M}_j\mathcal{T}_i	
	\end{align*}	
	\begin{align*}
		\mathcal{T}^{\mathbb{B}}_i\mathbb{M}_j &= \left[\begin{array}{c|c}T^{\mathcal{B}}_i &  \\ \hline  & H^{\mathcal{B}}_i \end{array}\right]\left[\begin{array}{c|c}M_j&  \\ \hline  &D_j \end{array}\right]
		= \left[\begin{array}{c|c}T^{\mathcal{B}}_iM_j&  \\ \hline  & H^{\mathcal{B}}_iD_j \end{array}\right]\\
		&= \left[\begin{array}{c|c}M_jT^{\mathcal{B}}_i& \\ \hline  & D_jH^{\mathcal{B}}_i\end{array}\right] \ (\text{by Lemma~\ref{lfqrtd}(c) and Remark~\ref{amfD}})\\
		&=\left[\begin{array}{c|c}M_j&  \\ \hline  &D_j \end{array}\right]\left[\begin{array}{c|c}T^{\mathcal{B}}_i &  \\ \hline  & H^{\mathcal{B}}_i \end{array}\right]
		=\mathbb{M}_j\mathcal{T}^{\mathbb{B}}_i.
	\end{align*}	
\end{itemize}
\end{proof}

\begin{definition}\label{syspendef}
Let $\mathbb{L}_{\sigma}(\lambda) = \lambda \mathbb{M}_{d} - \mathbb{M}_{\sigma}$ be the Fiedler pencil of $\mathcal{S}(\lambda)$ given in (\ref{smpq}) associated with a bijection $\sigma$. For $j = 1, 2, \ldots, d$, define
$$\mathbb{M}_{\sigma}^{(j)} := \prod_{\sigma^{-1}(i)\leq d-j}\mathbb{M}_{\sigma^{-1}(i)}, $$ where the factors $\mathbb{M}_{\sigma^{-1}(i)}$ are in the same relative order as they are in $\mathbb{M}_{\sigma}$. Note that $\mathbb{M}_{\sigma}^{(1)} = \prod_{\sigma^{-1}(i)\leq d-1}\mathbb{M}_{\sigma^{-1}(i)} = \mathbb{M}_{\sigma}$  and that $\mathbb{M}_{\sigma}^{(d)} = \mathbb{M}_{0}$. Also for $j = 1, 2, \ldots, d$ define the $(nd_{A}+md_{D})\times (nd_{A}+md_{D})$ system pencils $\mathbb{L}_{\sigma}^{(j)}(\lambda) := \lambda \mathcal{D}_{j}(\lambda) - \mathbb{M}_{\sigma}^{(j)}. $
 Observe that $\mathbb{L}_{\sigma}^{(1)}(\lambda) = \lambda \mathcal{D}_{1} - \mathbb{M}_{\sigma}^{(1)} = \lambda \mathbb{M}_{m} - \mathbb{M}_{\sigma} = \mathbb{L}_{\sigma}$ and that $$
\mathbb{L}_{\sigma}^{(d)}(\lambda) = \lambda \mathcal{D}_{d} - \mathbb{M}_{\sigma}^{(d)}
= \lambda \left[
            \begin{array}{c|c}
              D_{d_A} & 0 \\
             \hline
             0 & -E_{d_D} \\
            \end{array}
          \right] - \mathbb{M}_{0} $$
          $$ =  \left[
            \begin{array}{cc|cc}
              -I_{(d_A-1)n} &  & -(e_{d_A} e_{d_D}^{T}) \otimes B & \\
                & A(\lam) &  & \\
             \hline
              (e_{d_D} e_{d_A}^{T})\otimes C &  & -I_{(d_D-1)m} & \\
               &  & & D(\lam) \\
            \end{array}
          \right]. $$

\end{definition}

The next result shows that  $\mathbb{L}_{\sigma}^{(i)}(\lambda) \thicksim_{se} \mathbb{L}_{\sigma}^{(i+1)}(\lambda)$ for $i=1, 2, \ldots, d-1.$

\begin{lemma} \label{lietli+1} We have
$\mathbb{L}_{\sigma}^{(i)}(\lambda)\thicksim_{se} \mathbb{L}_{\sigma}^{(i+1)}(\lambda)$ for $i = 1, 2, \ldots, d-1$. More precisely,  if $\mathcal{Q}_{i}$ and $\mathcal{R}_{i}$ are the system polynomials given in Definition~\ref{amr}, then
$$\mathbb{L}_{\sigma}^{(i+1)}(\lambda)=
\begin{cases}
\mathcal{Q}_{i}^{\mathbb{B}} \mathbb{L}_{\sigma}^{(i)}(\lambda) \mathcal{R}_{i}, &  \text{ if } \sigma \text{ has a consecution at } d-i-1 \\
\mathcal{R}_{i}^{\mathbb{B}} \mathbb{L}_{\sigma}^{(i)}\mathcal{Q}_{i},  & \text{ if } \sigma \text{ has an inversion at } d-i-1.
\end{cases}$$
\end{lemma}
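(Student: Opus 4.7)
The plan is to imitate the proof of the analogous scalar (matrix polynomial) lemma (\cite{TDM}, Lemma~4.5), replacing Fiedler matrices $M_j$ by the system Fiedler matrices $\mathbb{M}_j$, and the auxiliary matrices $Q_i,R_i,T_i,D_i$ by their system counterparts $\mathcal{Q}_i,\mathcal{R}_i,\mathcal{T}_i,\mathcal{D}_i$. First, expand
\[
\mathbb{L}_{\sigma}^{(i)}(\lambda)=\lambda\mathcal{D}_i-\mathbb{M}_\sigma^{(i)},
\qquad
\mathbb{L}_{\sigma}^{(i+1)}(\lambda)=\lambda\mathcal{D}_{i+1}-\mathbb{M}_\sigma^{(i+1)},
\]
and note that $\mathbb{M}_\sigma^{(i+1)}$ is obtained from $\mathbb{M}_\sigma^{(i)}$ by deleting the factor $\mathbb{M}_{d-i}$ (the one with the largest index appearing in $\mathbb{M}_\sigma^{(i)}$).

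In the consecution case ($\sigma(d-i-1)<\sigma(d-i)$), the indices of all factors in $\mathbb{M}_\sigma^{(i)}$ other than $\mathbb{M}_{d-i-1}$ and $\mathbb{M}_{d-i}$ are at most $d-i-2$, and such factors commute with $\mathbb{M}_{d-i}$ by the commutativity relation $\mathbb{M}_j\mathbb{M}_k=\mathbb{M}_k\mathbb{M}_j$ when $|j-k|>1$. Hence one may shift $\mathbb{M}_{d-i}$ leftward past the intervening factors until it sits immediately to the right of $\mathbb{M}_{d-i-1}$, obtaining
\[
\mathbb{M}_\sigma^{(i)}=\mathbb{M}_{\sigma_1}\,\mathbb{M}_{d-i-1}\mathbb{M}_{d-i}\,\mathbb{M}_{\sigma_2},
\]
where $\mathbb{M}_{\sigma_1},\mathbb{M}_{\sigma_2}$ are products of factors with indices $\le d-i-2$, and in particular $\mathbb{M}_\sigma^{(i+1)}=\mathbb{M}_{\sigma_1}\mathbb{M}_{d-i-1}\mathbb{M}_{\sigma_2}$. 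Now observe (this will be the one thing that needs verification from the block-diagonal shape of $\mathcal{Q}_i,\mathcal{R}_i$ in Definition~\ref{amr}, but it reduces immediately to the scalar statement in \cite{TDM}) that $\mathcal{Q}_i^{\mathbb B}$ and $\mathcal{R}_i$ commute with every $\mathbb{M}_j$ for $j\le d-i-2$. With this commutativity in hand, compute
\begin{align*}
\mathcal{Q}_i^{\mathbb B}\mathbb{L}_\sigma^{(i)}\mathcal{R}_i
&=\mathcal{Q}_i^{\mathbb B}(\lambda\mathcal{D}_i)\mathcal{R}_i
-\mathbb{M}_{\sigma_1}\bigl(\mathcal{Q}_i^{\mathbb B}\mathbb{M}_{d-(i+1)}\mathbb{M}_{d-i}\mathcal{R}_i\bigr)\mathbb{M}_{\sigma_2}\\
&=\bigl(\lambda\mathcal{D}_{i+1}+\mathcal{T}_i\bigr)
-\mathbb{M}_{\sigma_1}\bigl(\mathbb{M}_{d-(i+1)}+\mathcal{T}_i\bigr)\mathbb{M}_{\sigma_2},
\end{align*}
where parts (a) of the preceding lemma were applied. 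Finally, part (c) gives $\mathbb{M}_{\sigma_1}\mathcal{T}_i\mathbb{M}_{\sigma_2}=\mathcal{T}_i$ (iterate $\mathbb{M}_j\mathcal{T}_i=\mathcal{T}_i=\mathcal{T}_i\mathbb{M}_j$ over the factors of $\sigma_1$ and $\sigma_2$), so the two $\mathcal{T}_i$ terms cancel and we obtain $\lambda\mathcal{D}_{i+1}-\mathbb{M}_\sigma^{(i+1)}=\mathbb{L}_\sigma^{(i+1)}(\lambda)$.

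The inversion case ($\sigma(d-i-1)>\sigma(d-i)$) is handled identically after swapping the order of the two factors to $\mathbb{M}_{d-i}\mathbb{M}_{d-i-1}$ and invoking parts (b) and (c) of the preceding lemma in place of (a) and (c); the roles of $\mathcal{Q}_i$ and $\mathcal{R}_i$ then swap, producing the identity $\mathcal{R}_i^{\mathbb B}\mathbb{L}_\sigma^{(i)}\mathcal{Q}_i=\mathbb{L}_\sigma^{(i+1)}$. Since $\mathcal{Q}_i$ and $\mathcal{R}_i$ (hence $\mathcal{Q}_i^{\mathbb B}$ and $\mathcal{R}_i^{\mathbb B}$) are unimodular, the resulting equalities are system equivalences in the sense of (\ref{sysequi}), which gives the stated $\sim_{se}$.

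The main obstacle in making this airtight is the claim that $\mathcal{Q}_i^{\mathbb B}$ and $\mathcal{R}_i$ commute with $\mathbb{M}_j$ for $j\le d-i-2$. For $j\le r-1$ all four of $\mathcal{Q}_i,\mathcal{R}_i,\mathbb{M}_j$ are block diagonal with the two diagonal blocks being the corresponding scalar auxiliary/Fiedler matrices for $A(\lambda)$ and $D(\lambda)$, so this reduces to the scalar commutativity already established in \cite{TDM}. For $r\le j\le d-i-2$ only one of the two diagonal blocks of $\mathbb{M}_j$ is nontrivial, while the corresponding diagonal block of $\mathcal{Q}_i,\mathcal{R}_i$ is either the scalar auxiliary matrix (which still commutes by \cite{TDM}) or the identity, so commutativity persists; the formulae used are exactly those in the proof of the scalar case. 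Granted this, everything else is bookkeeping, and the proof reduces to the two one-line computations displayed above.
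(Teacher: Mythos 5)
Your proof follows the same route the paper intends (the paper's own ``proof'' is a one-line appeal to Lemma~4.5 of \cite{TDM}): peel the factor $\mathbb{M}_{d-i}$ off $\mathbb{M}_{\sigma}^{(i)}$, move it next to $\mathbb{M}_{d-i-1}$ using the commutativity relations, and invoke parts (a)/(b) and (c) of the preceding lemma. The structure is right, but the step you yourself flag as ``the one thing that needs verification'' is a genuine gap, and your verification of it is incorrect. You assert that for $j\le r-1$ all of $\mathcal{Q}_i,\mathcal{R}_i,\mathbb{M}_j$ are block diagonal, so that the commutativity ``reduces to the scalar statement''. That is false for $j=0$: by Definition~\ref{mifpq}, $\mathbb{M}_0$ is the one Fiedler factor that is \emph{not} block diagonal---it carries the coupling blocks $(e_{d_A}e_{d_D}^{T})\otimes B$ and $-(e_{d_D}e_{d_A}^{T})\otimes C$---and it is always among the factors to be commuted past (it lies in $\sigma_1$ or $\sigma_2$ whenever $i\le d-2$). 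Commuting $\mathcal{Q}_i$ past $\mathbb{M}_0$ requires, besides the scalar relations, the extra identities $Q_i\bigl[(e_{d_A}e_{d_D}^{T})\otimes B\bigr]=\bigl[(e_{d_A}e_{d_D}^{T})\otimes B\bigr]Z_i$ and $Z_i\bigl[(e_{d_D}e_{d_A}^{T})\otimes C\bigr]=\bigl[(e_{d_D}e_{d_A}^{T})\otimes C\bigr]Q_i$, which do not appear in \cite{TDM}. When $d_A=d_D=d$ these identities do hold for all $i\le d-2$ (the nontrivial blocks of $Q_i,Z_i$ occupy block rows/columns $i,i+1\le d-1$, away from the last block row/column where $A_0$, $D_0$, $B$, $C$ sit), so in the equal-degree case your proof is repairable by adding this check.

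When $d_A\neq d_D$, however, your commutativity claim is false, and so is the identity you are trying to prove. Take $d_A=2$, $d_D=3$, $\sigma$ with all consecutions, $i=1$. Definition~\ref{amr} gives $\mathcal{Q}_1=\mathrm{diag}(Q_1,Z_1)$ and $\mathcal{R}_1=\mathrm{diag}(R_1,J_1)$, and $Q_1$ occupies block rows/columns $1,2$, i.e.\ it touches block row/column $d_A=2$, exactly where $-A_0$ and the coupling blocks of $\mathbb{M}_0$ live; already $Q_1M_0\neq M_0Q_1$, and also $j\le d-i-2$ with $d=\max\{d_A,d_D\}$ does not give the bound $j\le d_A-i-2$ that \cite{TDM}'s scalar commutativity needs. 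Moreover the conclusion of the lemma itself fails here: since the first block row of $\mathcal{Q}_1^{\mathbb{B}}$ is $(I_n,0,0,0,0)$, the first block row of $\mathcal{Q}_1^{\mathbb{B}}\mathbb{L}_{\sigma}^{(1)}(\lambda)\mathcal{R}_1$ equals
$$\bigl(\lambda A_2+A_1,\ -I_n,\ 0,\ 0,\ 0\bigr)\,\mathcal{R}_1=\bigl(I_n,\ 0,\ 0,\ 0,\ 0\bigr),$$
whereas the first block row of $\mathbb{L}_{\sigma}^{(2)}(\lambda)=\lambda\mathcal{D}_2-\mathbb{M}_0\mathbb{M}_1$ is $\bigl(\lambda(\mathcal{D}_2)_{11}+A_1,\ -I_n,\ 0,\ 0,\ 0\bigr)$, whose second block is $-I_n\neq 0$ for every (necessarily block-diagonal) choice of $\mathcal{D}_2$ in Definition~\ref{amr}. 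The structural reason is that for unequal degrees the factor $\mathbb{M}_{d-i}$ eliminated at step $i$ involves only the higher-degree polynomial during the first $|d_A-d_D|$ steps, so the Horner reduction of the lower-degree polynomial must lag by $|d_A-d_D|$ steps; Definition~\ref{amr} instead pairs $Q_i$ with $Z_i$ for $i\le r-1$ and inserts identity blocks afterwards, which is exactly backwards. So your argument establishes the lemma only for $d_A=d_D$; for $d_A\neq d_D$ no amount of bookkeeping with the stated $\mathcal{Q}_i,\mathcal{R}_i$ can close the gap, because the statement is then false as written---an issue that the paper's one-line citation of \cite{TDM} also fails to detect, but which a referee would expect your write-up to expose rather than reproduce.
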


\begin{proof} The proof is exactly the same as that of Lemma~4.5 in \cite{TDM}.
\end{proof}

It is now immediate that  a Fiedler pencil is a Rosenbrock linearization of $\mathcal{S}(\lam).$

\begin{theorem}[Rosenbrock linearization] Let $\mathcal{S}(\lam)$ be an $(n+m)\times (n+m)$ system polynomial (regular or singular) given in (\ref{smpq}).
 Then a Fiedler pencil $\mathbb{L}_{\sigma}(\lambda)$ of the system polynomial $\mathcal{S}(\lam)$ is a Rosenbrock linearization of $\mathcal{S}(\lam)$.
\end{theorem}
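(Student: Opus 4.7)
The plan is to reduce the Fiedler pencil $\mathbb{L}_{\sigma}(\lambda)$ to the canonical Rosenbrock form by iterating the local equivalences in Lemma~\ref{lietli+1}. By Theorem~\ref{fflptflr}, $\mathbb{L}_{\sigma}(\lambda)$ already has the correct block shape of a Rosenbrock linearization, namely degree-one polynomial blocks on the diagonal and constant off-diagonal blocks built from $B$ and $C$. What remains is to exhibit the system equivalence to $\diag(I,\mathcal{S}(\lambda))$ of the form~(\ref{sysL}).

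Since every auxiliary system polynomial $\mathcal{Q}_i$ and $\mathcal{R}_i$ from Definition~\ref{amr} is block-diagonal unimodular with respect to the $(d_An\,|\,d_Dm)$ partition (its diagonal blocks $Q_i,Z_i$ and $R_i,J_i$ being individually unimodular), their cumulative products remain block-diagonal unimodular, regardless of whether each step uses a consecution or inversion of $\sigma$. Iterating Lemma~\ref{lietli+1} therefore chains into
\[
\mathbb{L}_{\sigma}(\lambda)=\mathbb{L}_{\sigma}^{(1)}(\lambda)\thicksim_{se}\mathbb{L}_{\sigma}^{(2)}(\lambda)\thicksim_{se}\cdots\thicksim_{se}\mathbb{L}_{\sigma}^{(d)}(\lambda).
\]

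Next I would read off $\mathbb{L}_{\sigma}^{(d)}(\lambda)=\lambda\mathcal{D}_d-\mathbb{M}_0$ explicitly. Using the Horner-shift identity $\lambda P_{d_A-1}(\lambda)+A_0=A(\lambda)$ and its counterpart in the $D$-half, the diagonal blocks collapse to $\diag(-I_{(d_A-1)n},A(\lambda))$ and $\diag(-I_{(d_D-1)m},D(\lambda))$, while the off-diagonal blocks inherited from $-\mathbb{M}_0$ are exactly $-(e_{d_A}e_{d_D}^T)\otimes B$ and $(e_{d_D}e_{d_A}^T)\otimes C$. Thus the last $n$ rows/columns of the top half, paired with the last $m$ rows/columns of the bottom half, reproduce $\mathcal{S}(\lambda)$ exactly, while the remaining rows/columns carry only $\pm I$. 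A final constant signed block-diagonal change of variables (of sizes $d_An$ and $d_Dm$) flips the $-I$ entries to $+I$ without altering the $\mathcal{S}(\lambda)$ block, producing the required form in~(\ref{sysL}).

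The main obstacle is essentially bookkeeping: one must check that the sequence of transformations supplied by Lemma~\ref{lietli+1} preserves the block-diagonal shape with respect to the $(d_An\,|\,d_Dm)$ split at every stage (so that the composite transformation still qualifies for system equivalence in the sense of~(\ref{sysequi})), and that the Horner-shift collapse occurs in both the $A$ and $D$ halves simultaneously so that $\lambda\mathcal{D}_d-\mathbb{M}_0$ aligns with $\diag(I,\mathcal{S}(\lambda))$ up to the described constant rescaling. Both points are routine given the explicit definitions of $\mathcal{Q}_i,\mathcal{R}_i,\mathcal{D}_i$ and $\mathbb{M}_0$ in Definitions~\ref{mifpq} and~\ref{amr}.
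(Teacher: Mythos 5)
Your proposal is correct and follows essentially the same route as the paper's own proof: iterate the system equivalences of Lemma~\ref{lietli+1} to pass from $\mathbb{L}_{\sigma}^{(1)}(\lambda)$ to $\mathbb{L}_{\sigma}^{(d)}(\lambda)=\lambda\mathcal{D}_d-\mathbb{M}_0$, then read off that this final pencil is $I_{(d_A-1)n}\oplus\mathcal{S}(\lambda)\oplus I_{(d_D-1)m}$ up to constant (signed, permuted) block-diagonal transformations. Your added care about the block-diagonal unimodularity of the accumulated $\mathcal{Q}_i,\mathcal{R}_i$ factors and the final sign flip only makes explicit details the paper leaves implicit.
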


\begin{proof} By Lemma~\ref{lietli+1}, we have $d-1$ system equivalences
$$
\mathbb{L}_{\sigma}(\lambda) = \mathbb{L}_{\sigma}^{(1)}(\lambda) \thicksim_{se} \mathbb{L}_{\sigma}^{(2)}(\lambda) \thicksim_{se}\cdots \thicksim_{se} \mathbb{L}_{\sigma}^{(d)}(\lambda)$$
$$ =  \left[
            \begin{array}{cc|cc}
              -I_{(d_{A}-1)n} &  & -(e_{d_{A}} e_{d_{D}}^{T}) \otimes B & \\
                & A(\lam) &  & \\
             \hline
              (e_{d_{D}} e_{d_{A}}^{T})\otimes C &  & -I_{(d_{D}-1)m} & \\
               &  & & D(\lam) \\
            \end{array}
          \right], $$
where $\mathbb{L}_{\sigma}^{(i)}(\lambda)$ is as in Lemma~\ref{lietli+1}.  This shows that $\mathbb{L}_{\sigma}(\lambda) \thicksim_{se} I_{(d_{A}-1)n} \oplus \mathcal{S}(\lam) \oplus I_{(d_{D}-1)m}.$ \end{proof}


\begin{corollary}\label{eouavflor}
Let $\mathbb{L}_{\sigma}(\lambda)$ be the Fiedler pencil of $\mathcal{S}(\lambda)$ given in (\ref{smpq}) associated with a bijection $\sigma$, and $\mathcal{Q}_{i}, \mathcal{R}_{i}$ for $i = 1, 2, \ldots d-1$, be as in Definition \ref{amr}. Then
\begin{equation*}
\mathcal{U}(\lambda) \mathbb{L}_{\sigma}(\lambda)\mathcal{V}(\lambda) = \left[
            \begin{array}{cc|cc}
              -I_{(d_{A}-1)n} &  & -(e_{d_{A}} e_{d_{D}}^{T}) \otimes B & \\
                & A(\lam) &  & \\
             \hline
              (e_{d_{D}} e_{d_{A}}^{T})\otimes C &  & -I_{(d_{D}-1)m} & \\
               &  & & D(\lam) \\
            \end{array}
          \right]
\end{equation*}
$$ \thicksim_{se} I_{(d_{A}-1)n} \oplus \mathcal{S}(\lam) \oplus I_{(d_{D}-1)m},$$
where $\mathcal{U}(\lambda)$ and $\mathcal{V}(\lambda)$ are $(nd_A+m d_{D})\times (nd_A+m d_{D})$ unimodular system polynomials given by
$$\mathcal{U}(\lambda) := \mathcal{U}_{0}\mathcal{U}_{1}\cdots \mathcal{U}_{d-3}\mathcal{U}_{d-2}, \mbox{  with  } \mathcal{U}_{i} =
\begin{cases}
\mathcal{Q}_{d-(i+1)}^{\mathbb{B}}, & \text{if } \sigma \text{ has a consecution at } i, \\
\mathcal{R}_{d-(i+1)}^{\mathbb{B}}, & \text{if } \sigma \text{ has an inversion at } i,
\end{cases}
$$
$$\mathcal{V}(\lambda) := \mathcal{V}_{d-2}\mathcal{V}_{d-3}\cdots \mathcal{V}_{1}\mathcal{V}_{0}, \mbox{  with  } \mathcal{V}_{i} =
\begin{cases}
\mathcal{R}_{d-(i+1)}, & \text{if } \sigma \text{ has a consecution at } i, \\
\mathcal{Q}_{d-(i+1)}, & \text{if } \sigma \text{ has an inversion at } i.
\end{cases}
$$
\end{corollary}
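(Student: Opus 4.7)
The corollary simply records the explicit unimodular transformations arising from chaining together the elementary equivalences proved in Lemma~\ref{lietli+1}. My plan is to iterate that lemma $d-1$ times, carefully tracking the order in which the factors appear on the left and right, and then to match the accumulated product to the expressions defining $\mathcal{U}(\lambda)$ and $\mathcal{V}(\lambda)$.

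\textbf{Step 1: Chain the equivalences.} Starting from $\mathbb{L}_\sigma^{(1)}(\lambda)=\mathbb{L}_\sigma(\lambda)$ and applying Lemma~\ref{lietli+1} successively for $i=1,2,\ldots,d-1$, I obtain a sequence of equalities
\[
\mathbb{L}_\sigma^{(i+1)}(\lambda)=\mathcal{X}_i\,\mathbb{L}_\sigma^{(i)}(\lambda)\,\mathcal{Y}_i,
\]
where the pair $(\mathcal{X}_i,\mathcal{Y}_i)$ equals $(\mathcal{Q}_i^{\mathbb{B}},\mathcal{R}_i)$ if $\sigma$ has a consecution at position $d-i-1$, and $(\mathcal{R}_i^{\mathbb{B}},\mathcal{Q}_i)$ if $\sigma$ has an inversion at that position. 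Unrolling the recursion gives
\[
\mathbb{L}_\sigma^{(d)}(\lambda)=\mathcal{X}_{d-1}\cdots\mathcal{X}_2\mathcal{X}_1\,\mathbb{L}_\sigma(\lambda)\,\mathcal{Y}_1\mathcal{Y}_2\cdots\mathcal{Y}_{d-1}.
\]
The explicit form of $\mathbb{L}_\sigma^{(d)}(\lambda)$ was computed in Definition~\ref{syspendef} and is exactly the middle matrix in the statement of the corollary.

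\textbf{Step 2: Reindex to match the statement.} The defining product $\mathcal{U}(\lambda)=\mathcal{U}_0\mathcal{U}_1\cdots\mathcal{U}_{d-2}$ runs over positions $i=0,1,\ldots,d-2$. I set the bijection $i\longleftrightarrow d-i-1$ between positions and iteration steps. Under this identification, $\mathcal{X}_{d-i-1}$ (from the chain above) must coincide with $\mathcal{U}_i$. Since $\mathcal{X}_{d-i-1}$ is selected by the behaviour of $\sigma$ at position $d-(d-i-1)-1=i$, and equals $\mathcal{Q}_{d-i-1}^{\mathbb{B}}$ in the consecution case and $\mathcal{R}_{d-i-1}^{\mathbb{B}}$ in the inversion case, this matches precisely the recipe for $\mathcal{U}_i=\mathcal{Q}_{d-(i+1)}^{\mathbb{B}}$ or $\mathcal{R}_{d-(i+1)}^{\mathbb{B}}$. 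The same bookkeeping on the right side identifies $\mathcal{Y}_{d-i-1}$ with $\mathcal{V}_i$. Thus $\mathcal{X}_{d-1}\cdots\mathcal{X}_1=\mathcal{U}_0\cdots\mathcal{U}_{d-2}=\mathcal{U}(\lambda)$ and $\mathcal{Y}_1\cdots\mathcal{Y}_{d-1}=\mathcal{V}_{d-2}\cdots\mathcal{V}_0=\mathcal{V}(\lambda)$.

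\textbf{Step 3: Reduce to the diagonal form.} The equality $\mathcal{U}(\lambda)\mathbb{L}_\sigma(\lambda)\mathcal{V}(\lambda)=\mathbb{L}_\sigma^{(d)}(\lambda)$ established above is precisely the first displayed equality of the corollary. The remaining system equivalence $\mathbb{L}_\sigma^{(d)}(\lambda)\thicksim_{se} I_{(d_A-1)n}\oplus\mathcal{S}(\lambda)\oplus I_{(d_D-1)m}$ follows from elementary block row/column operations: the $-I_{(d_A-1)n}$ and $-I_{(d_D-1)m}$ identity blocks are used to clear the off-diagonal coupling terms $-(e_{d_A}e_{d_D}^T)\otimes B$ and $(e_{d_D}e_{d_A}^T)\otimes C$ into the $A(\lambda)$ and $D(\lambda)$ positions, which reassembles the Rosenbrock block $\mathcal{S}(\lambda)$ of (\ref{smpq}). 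After a sign change on the identity blocks (a constant unimodular operation), this gives the required form.

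\textbf{Main obstacle.} The computation is essentially mechanical once Lemma~\ref{lietli+1} is in hand; the only real pitfall is the index inversion $i\leftrightarrow d-i-1$ which governs both the order of multiplication in the products defining $\mathcal{U}$ and $\mathcal{V}$ and the subscript on $\mathcal{Q}$, $\mathcal{R}$. One must also verify that $\mathcal{U}(\lambda)$ and $\mathcal{V}(\lambda)$ are unimodular, which is immediate since each $\mathcal{Q}_i,\mathcal{R}_i$ (hence $\mathcal{Q}_i^{\mathbb{B}},\mathcal{R}_i^{\mathbb{B}}$) is unimodular by Definition~\ref{amr}.
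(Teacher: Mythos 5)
Your Steps 1 and 2 reproduce exactly the argument the paper itself relies on (the corollary is stated without a separate proof precisely because it is the unrolled form of Lemma~\ref{lietli+1} together with the computation of $\mathbb{L}_{\sigma}^{(d)}(\lambda)$ in Definition~\ref{syspendef}): you chain the $d-1$ equivalences, and your index bookkeeping is right, since $\mathcal{X}_{d-(k+1)}$ is selected by the behaviour of $\sigma$ at position $k$, giving $\mathcal{U}(\lambda)=\mathcal{X}_{d-1}\cdots\mathcal{X}_{1}=\mathcal{U}_{0}\cdots\mathcal{U}_{d-2}$ and $\mathcal{V}(\lambda)=\mathcal{Y}_{1}\cdots\mathcal{Y}_{d-1}=\mathcal{V}_{d-2}\cdots\mathcal{V}_{0}$, in the correct orders. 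The unimodularity remark is also fine.

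Step 3, however, misreads the block structure of $\mathbb{L}_{\sigma}^{(d)}(\lambda)$, and the operations you describe could not be carried out as stated. The displayed blocks $-(e_{d_{A}}e_{d_{D}}^{T})\otimes B$ and $(e_{d_{D}}e_{d_{A}}^{T})\otimes C$ are the \emph{entire} $d_{A}n\times d_{D}m$ and $d_{D}m\times d_{A}n$ off-diagonal quadrants (the display is not block-conformal); their only nonzero entries are $-B$, located in the block row of $A(\lam)$ and the block column of $D(\lam)$, and $C$, located in the block row of $D(\lam)$ and the block column of $A(\lam)$. These entries do not meet any row or column of the identity blocks $-I_{(d_{A}-1)n}$, $-I_{(d_{D}-1)m}$, so there is nothing the identity blocks could ``clear'' — and indeed these entries must \emph{not} be eliminated, since together with $A(\lam)$ and $D(\lam)$ they already form the Rosenbrock block $\mathcal{S}(\lam)$ of (\ref{smpq}). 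The final step is therefore even more immediate than you suggest: up to the block permutation implicit in the notation $I_{(d_{A}-1)n}\oplus\mathcal{S}(\lam)\oplus I_{(d_{D}-1)m}$ (a permutation acting separately within the $A$-part and the $D$-part, hence admissible for system equivalence), one only needs to flip the signs of the identity blocks by a constant block-diagonal unimodular transformation. With Step 3 replaced by this observation, your proof is complete and coincides with the paper's.
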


The indexing of $\mathcal{U}_i$ and $\mathcal{V}_i$ factors in $\mathcal{U}(\lam)$ and $\mathcal{V}(\lam),$ respectively,  in Corollary~\ref{eouavflor} has been chosen for simplification of notation and has no other special significance.

\begin{remark}
If we consider $D(\lam)$ is a matrix polynomial of degree $1$ then the Fiedler pencils $\mathbb{L}_{\sigma}(\lambda)$ are linearizations of the system matrix of LTI state-space system, see \cite{rafinami}.
\end{remark}

\begin{remark} Consider the system matrix $\mathcal{S}(\lambda)$ and associated transfer function $R(\lam)$ given in (\ref{smpq}) and (\ref{tf}), respectively.
Given an eigenvector $x$ of $\mathbb{L}_{\sigma}(\lambda)$ one can determine an eigenvector of $\mathcal{S}(\lambda)$ from $x$. That is, one can recover eigenvectors of $R(\lam)$ and $\mathcal{S}(\lambda)$ from those of the Fiedler pencils  of $R(\lam)$. It is directly follows from the Theorem~4.10 and Theorem~4.11 in \cite{beh21}.
\end{remark}

\section{Conclusions} We have considered a  multivariable state-space system and its associated system matrix $\mathcal{S}(\lam).$  We have introduced Fiedler pencils of $\mathcal{S}(\lam)$ and described
an algorithm for their construction. Finally, we have shown that Fiedler pencils are linearizations of $\mathcal{S}(\lam)$.

\bibliographystyle{plain}
\bibliography{Beh_m}

\end{document}